
%
%
\documentclass[
10pt
]{article}

%
%


%
%
\usepackage{color}
\definecolor{LinkColor}{rgb}{0,0,1}
\definecolor{lbcolor}{rgb}{0.85,0.85,0.85}
\definecolor{FrameColor}{rgb}{0.85,0.85,0.85}

%

%
%
\usepackage[ngerman, french, english]{babel}
\usepackage[utf8]{inputenc}
\usepackage[T1]{fontenc}
\usepackage{enumerate}
\usepackage{setspace}

\def\pskip{\\[-3mm]}
\def\bpskip{\\[-2mm]}

\usepackage[babel,french=guillemets,german=swiss]{csquotes}

%
%
\usepackage{amsmath}
\usepackage{amssymb}
\usepackage{dsfont}
\usepackage{nicefrac}
\usepackage{empheq}
\allowdisplaybreaks

%
%
\usepackage[%
pdftitle={Titel},
pdfauthor={Autor},
pdfcreator={LaTeX, LaTeX with hyperref and KOMA-Script},
pdfsubject={Betreff}, 
pdfkeywords={Keywords}
]{hyperref} 

\hypersetup{colorlinks=true,
	linkcolor=LinkColor,%
	anchorcolor=LinkColor,%
	citecolor=LinkColor,%
	filecolor=LinkColor,%
	menucolor=LinkColor,%
	urlcolor=LinkColor,%
	allcolors=LinkColor}

%
%


%
%

\usepackage{amsthm}

\newtheoremstyle{tstyle}
{15pt}	
{5pt}	
{\itshape}	
{}	
{\bfseries}	
{.}	
{0.5em}	
{}	

\theoremstyle{tstyle}

\newtheorem{thm}{Theorem}
\newtheorem{lem}[thm]{Lemma}
\newtheorem{prop}[thm]{Proposition}
\newtheorem{cor}[thm]{Corollary}
\newtheorem{defn}[thm]{Definition}

\newtheoremstyle{cstyle}
{15pt}	
{5pt}	
{}	
{}	
{\bfseries}	
{}	
{0.2222em}	
{}	

\theoremstyle{cstyle}

\newtheorem*{com}{Comment}

\makeatletter
\g@addto@macro{\thm@space@setup}{\thm@headpunct{}}
\renewenvironment{proof}[1][\proofname]{\par
	\pushQED{\qed}%
	\normalfont \topsep6\p@\@plus6\p@\relax
	\trivlist
	\item[\hskip\labelsep
	\bfseries
	#1\@addpunct{\,}]\ignorespaces
}{%
	\popQED\endtrivlist\@endpefalse
}
\makeatother

\makeatletter
\g@addto@macro{\thm@space@setup}{\thm@headpunct{}}
\newenvironment{sketch-proof}[1][Sketch of the proof]{\par
	\pushQED{\qed}%
	\normalfont \topsep6\p@\@plus6\p@\relax
	\trivlist
	\item[\hskip\labelsep
	\bfseries
	#1\@addpunct{\,}]\ignorespaces
}{%
	\popQED\endtrivlist\@endpefalse
}
\makeatother

%
%

\def\RR{\mathbb R}
\def\WW{{\mathcal W}}
\def\HH{{\mathcal H}}
\def\VV{{\mathcal V}}
\def\NN{\mathbb N}

\def\BB{{\mathbb B_K}}

\def\MM{\mathbb M}

\def\eps{\varepsilon}
\def\supp{\textnormal{supp }}

\def\BR{{B_R(0)}}

\def\K{{K}}

\def\ddt{\frac{\mathrm d}{\mathrm dt}}

\def\ddtau{\frac{\mathrm d}{\mathrm d\tau}}

\def\dtau{\;\mathrm d\tau}
\def\dz{\;\mathrm dz}
\def\dx{\;\mathrm dx}
\def\dy{\;\mathrm dy}
\def\dv{\;\mathrm dv}
\def\dt{\;\mathrm dt}
\def\ds{\;\mathrm ds}

\def\B{{\bar B}}

\def\delzi{\partial_{z_i}}
\def\delzj{\partial_{z_j}}
\def\delxi{\partial_{x_i}}
\def\delxj{\partial_{x_j}}

\def\delt{\partial_{t}}
\def\delx{\partial_{x}}
\def\delv{\partial_{v}}
\def\delz{\partial_{z}}

\def\grad{\nabla}
\def\laplace{\Delta}

\def\bigvert{\;\big\vert\;}
\def\Bigvert{\;\Big\vert\;}

\def\lbr{\left\{ }
\def\rbr{\right\} }
\def\conf{\hspace{0.1cm}\widehat{=}\hspace{0.1cm}}

\def\tand{\quad\text{and}\quad}
\def\twith{\quad\text{with}\quad}

\def\B*{{\bar B}}
\def\u*{{\bar u}}
\def\f*{f_{\u*}}
\def\g*{g_{\u*}}

\def\wto{\rightharpoonup}

\newcommand{\Underset}[3][0pt]{\ensuremath{\underset{\raise#1\hbox{\small\ensuremath{#2}}}{#3}}}

\begin{document}

\begin{center}	
\LARGE{Optimal control of a Vlasov-Poisson plasma by an external magnetic field \\[2mm] The basics for variational calculus}\\[5mm]
\normalsize{P. Knopf}\\
\textit{University of Bayreuth, 95440 Bayreuth, Germany}
\texttt{Patrik.Knopf@uni-bayreuth.de}
\end{center}

\begin{abstract}
	We consider the three dimensional Vlasov-Poisson system in the plasma physical case. It describes the time evolution of the distribution function of a large number of electrically charged particles which move under the influence of a self-consistent electric field that is generated by the charge of the particles. Thereby the particles obey Newton's laws of motion. Collisions of the particles, electrodynamic and relativistic effects are neglected in this model. The aim of various concrete applications is to control the distribution function of the plasma in a desired way by an external magnetic field that interacts with the particles via Lorentz force. This can be modeled by an optimal control problem. For that reason the basics for calculus of variations will be introduced in this paper. We have to find a suitable class of fields that are admissible for this procedure as they provide unique global solutions of the Vlasov-Poisson system. Then we can define a field-state operator that maps any admissible field onto its corresponding distribution function. We will show that this field-state operator is Lipschitz continuous and (weakly) compact. Last we will consider a model problem with a tracking type cost functional and we will show that this optimal control problem has at least one globally optimal solution.\\
	
	\textit{Keywords}: Vlasov-Poisson equation, optimal control, nonlinear partial differential equations, calculus of variations.
\end{abstract}

\section{Introduction}

The three dimensional Vlasov-Poisson system in the plasma physical case is given by the following system of partial differential equations:
\begin{equation}
\label{VP}
\begin{cases}
\partial_t f + v\cdot \partial_x f - \partial_x \psi \cdot \partial_v f = 0,\\[0.25cm]
- \Delta \psi = 4\pi \rho, \quad \lim_{|x|\to\infty} \psi(t,x) = 0,\\[0.25cm]
\rho(t,x) = \int f(t,x,v)\ \mathrm dv.
\end{cases}
\end{equation}
Here $f=f(t,x,v)\ge 0$ denotes the distribution function of the particle ensemble that is a scalar function representing the density in phase space. Its time evolution is described by the first line of \eqref{VP} which is a first order partial differential equation that is referred to as the Vlasov equation. For any measurable set $M\subset \mathbb R^6$,
$\int_M f(t,x,v)\, \mathrm d(x,v)$
yields the charge of the particles that have space coordinates $x\in\mathbb R^3$ and velocity coordinates $v\in\mathbb R^3$ with $(x,v)\in M$ at time $t\ge 0$. The function $\psi$ is the electrostatic potential that is induced by the charge of the particles. It is given by Poisson's equation $-\laplace \psi = 4\pi\rho$ with an homogeneous boundary condition where $\rho$ denotes the volume charge density. The self-consistent electric field is then given by $-\partial_x \psi$. Note that both $\psi$ and $-\partial_x \psi$ depend linearly on $f$. Hence the Vlasov-Poisson system is nonlinear due to the term $-\partial_x \psi \cdot \partial_v f$ in the Vlasov equation. Assuming $f$ to be sufficiently regular (e.g., $f(t) := f(t,\cdot,\cdot)\in C^1_c (\mathbb R^6)$ for all $t \ge 0$), we can solve Poisson's equation explicitly and obtain
\begin{equation}
\label{PSIF}
\psi_f(t,x) = \iint \frac{f(t,y,w)}{|x-y|} \;\mathrm dw \mathrm dy\ \text{  for } t\ge 0, x\in\mathbb R^3.
\end{equation}
Considering $f\mapsto \psi_f$ to be a linear operator we can formally rewrite the Vlasov-Poisson system as
\begin{equation}
\label{VP2}
\partial_t f + v\cdot \partial_x f - \partial_x \psi_f \cdot \partial_v f = 0.
\end{equation}
Combined with the condition
\begin{equation}
\label{IC}
f|_{t=0} = \mathring f
\end{equation}
for some function $\mathring f\in C^1_c(\RR^6)$ we obtain an initial value problem. A first local existence and uniqueness result to this initial value problem was proved by R. Kurth \cite{kurth}. Later J. Batt \cite{batt} established a continuation criterion which claims that a local solution can be extended as long as its velocity support is under control. Finally, two different proofs for global existence of classical solutions were established independently and almost simultaneously, one by K. Pfaffelmoser \cite{pfaffelmoser} and one by P.-L. Lions and B. Perthame \cite{lions-perthame}. Later, a greatly simplified version of Pfaffelmoser's proof was published by J. Schaeffer \cite{schaeffer}. This means that the follwing result is established: Any nonnegative initial datum \linebreak$\mathring f \in C^1_c(\mathbb R^6)$ launches a global classical solution $f\in C^1([0,\infty[\times\mathbb R^6)$ of the Vlasov-Poisson system \eqref{VP} satisfying the initial condition \eqref{IC}. Moreover, for every time $t\in[0,\infty[$, $f(t)=f(t,\cdot,\cdot)$ is compactly supported in $\mathbb R^6$.
Hence equation \eqref{PSIF} and the reformulation of the Vlasov-Poisson system \eqref{VP2} are well-defined in the case $\mathring f \in C_c^1(\mathbb R^6)$. For more information we recommend to consider the article \cite{rein} by G. Rein that gives an overview on the most important results.\pskip

To control the distribution function $f$ we will add an external magnetic field $B$ to the Vlasov equation:
\begin{equation}
\label{VPC}
\partial_t f + v\cdot \partial_x f - \partial_x \psi_f \cdot \partial_v f + (v\times B) \cdot \partial_v f = 0, \quad f|_{t=0} = \mathring f\;.
\end{equation}
The cross product $v\times B$ occurs since, unlike the electric field, the magnetic field interacts with the particles via Lorentz force.  If we want to discuss an optimal control problem where the PDE-constraint is given by \eqref{VPC} we must firstly establish the basics for variational calculus:
\begin{itemize}
	\item We will introduce a set $\BB$ such that any field $B\in\BB$ induces a \textit{unique} and \textit{sufficiently regular} strong solution $f=f_B$ of the initial value problem~\eqref{VPC} that exists on any given time interval $[0,T]$. The set $\BB$ will be referred to as the \textit{set of admissible fields}.
	\item We will show that the solution $f_B$ depends Lipschitz-continuously on the field $B$ while the partial derivatives $\delt f_B$, $\delx f_B$ and $\delv f_B$ depend Hölder-continuously on $B$.
	\item We will also prove that the operator $B\mapsto f_B$ is \textit{compact/weakly compact} in some suitable sense.
\end{itemize}
Finally, we will use these results to discuss a model problem of optimal control: Let $\mathring f \in C^2_c(\RR^6)$ be any given initial datum and let $T>0$ be any given final time. The aim is to control the distribution function $f_B$ in such a way that its value at time $T$ matches a desired distribution $f_d$ as closely as possible. We will consider the following optimal control problem
\begin{align*}
\text{Minimize}\; J(B) := \frac 1 2 \|f_B(T) - f_d\|_{L^2(\RR^6)}^2 + \frac \lambda 2 \|D_x B\|_{L^2(]0,T[\times\RR^3;\RR^3)}, \;\, \text{s.t.}\; B\in\BB
\end{align*}
and we will show that this problem has at least one optimal solution.

\section{Notation and preliminaries}

Our notation is mostly standard or self-explaining. However, to avoid misunderstandings, we fix some of it here. We will also present some basic results that are necessary for the later approach.

Let $d\in\NN$, $U\subset \RR^d$ be any open subset, $k\in \NN$ and $1\le p\le \infty$ be arbitrary. $C^k(U)$ denotes the space of $k$ times continuously differentiable functions on $U$, $C_c(U)$ denotes the space of $C^k(U)$-functions having compact support in $U$ and $C^k_b(U)$ denotes the space of $C^k(U)$-functions that are bounded with respect to the norm
\begin{align*}
\|u\|_{C^k_b(U)} := \underset{|\alpha|\le k}{\sup} \; \|D^\alpha u\|_\infty = \underset{|\alpha|\le k}{\sup} \; \underset{x\in U}{\sup}\; |D^\alpha u(x)| , \quad u\in C^k(U).
\end{align*}
For any $\gamma \in ]0,1]$, $C^{k,\gamma}(U)$ denotes the space of Hölder-continuous $C^k(U)$-functions, i.e.,
\begin{align*}
C^{k,\gamma}(U) := \big\{ u\in C^k(U) \bigvert \|u\|_{C^{k,\gamma}(U)} < \infty \big\}
\end{align*}
where for any $u\in C^k(U)$,
\begin{align*}
\|u\|_{C^k_b(U)} := \underset{|\alpha|\le k}{\sup} \big\{ \|D^\alpha u\|_\infty, [D^\alpha u]_\gamma \big\} \;\;\text{with}\;\; [D^\alpha u]_\gamma:= \underset{x\neq y}{\sup} \frac{|D^\alpha u(x) - D^\alpha u(y)|}{|x-y|^\gamma}.
\end{align*}
Note that $\big( C^k_b(U), \|\cdot\|_{C^k_b(U)} \big)$ and $\big( C^{k,\gamma}(U), \|\cdot\|_{C^{k,\gamma}(U)} \big)$ are Banach spaces. $L^p(U)$ denotes the standard $L^p$-space on $U$ and $W^{k,p}(U)$ denotes the standard Sobolov space on $U$ as, for instance, defined by E. Lieb and M. Loss in \linebreak\cite[s.\,2.1,6.7]{lieb-loss}. If $U=\RR^d$ we will sometimes omit the argument "$(\RR^d)$". For example, we will just write $L^p$, $W^{k,p}$ or $C^k_b$ instead of $L^p(\RR^d)$, $W^{k,p}(\RR^d)$ or $C^k_b(\RR^d)$. If $U\neq \RR^d$ we will not use this abbreviation. We will also use Banach space-valued Sobolev spaces as defined by L. C. Evans \cite[p.\,301-305]{evans}. For any Banach space $X$, $L^p(0,T;X)$ denotes the space of Banach space-valued $L^p$-functions $[0,T]\ni t\mapsto u(t) \in X$. Analogously, $W^{k,p}(0,T;X)$ denotes the Banach space-valued Sobolev space. The following properties are essential:
\begin{lem}
	\label{SOBBOCH}
	Let $T>0$, $d,m \in\NN$, $U\subset \RR^d$ be any open subset and $1\le p,q< \infty$ be arbitrary. Then the following holds:
	\begin{itemize}
		\item[\textnormal{(a)}] For any function $u\in L^p(0,T;W^{k,q}(\RR^d))$ there exists some sequence \linebreak$(u_j)\subset C^\infty(]0,T[\times \RR^d)$ such that
		\begin{gather*}
		\forall j\in\NN\; \exists r_j>0\; \forall t\in[0,T]: \supp u_j(t) \subset B_{r_j}(0)\\
		\text{and}\quad u_j \to u \quad \text{in}\; L^p(0,T;W^{k,q}(\RR^d)).
		\end{gather*}
		\item[\textnormal{(b)}] For any function $u\in W^{k,p}(0,T;L^q(\RR^d))$ there exists some sequence \linebreak$(u_j)\subset C^\infty(]0,T[\times \RR^d)$ such that
		\begin{gather*}
		\forall j\in\NN\; \exists r_j>0\; \forall t\in[0,T]: \supp u_j(t) \subset B_{r_j}(0)\\
		\text{and}\quad u_j \to u \quad \text{in}\; W^{k,p}(0,T;L^q(\RR^d)).
		\end{gather*}
		\item[\textnormal{(c)}] $L^p(]0,T[\times\RR^d)= L^p\big(0,T;L^p(\RR^d)\big)$.
		\item[\textnormal{(d)}] $W^{1,p}(]0,T[\times\RR^d) = W^{1,p}\big(0,T;L^p(\RR^d)\big) \cap L^p\big(0,T;W^{1,p}(\RR^d)\big)$.
	\end{itemize}
\end{lem}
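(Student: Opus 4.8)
The plan is to prove each of the four parts more or less independently, since they are standard facts about Bochner–Sobolev spaces, and the real work is only in parts (a) and (b); parts (c) and (d) are then essentially bookkeeping with Fubini's theorem and a characterization of weak derivatives.

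For part (a), I would start from the fact that $C^\infty_c(\RR^d)$ is dense in $W^{k,q}(\RR^d)$ and that simple functions (finite sums $\sum_i \chi_{E_i}\otimes g_i$ with $E_i\subset[0,T]$ measurable and $g_i\in W^{k,q}(\RR^d)$) are dense in $L^p(0,T;W^{k,q}(\RR^d))$ — this is the standard construction of the Bochner integral. Combining these, functions of the form $\sum_i \chi_{E_i}(t)\,g_i(x)$ with $g_i\in C^\infty_c(\RR^d)$ are dense; by choosing a large ball containing all the (finitely many) supports $\supp g_i$, such a function already has the uniform compact-support-in-$x$ property. It remains to mollify in the $t$-variable to gain smoothness in time without destroying the smoothness in $x$ or enlarging the $x$-support: convolving $t\mapsto \sum_i\chi_{E_i}(t)g_i$ with a standard mollifier $\eta_\delta(t)$ (after extending by zero, or reflecting, near $t=0,T$) produces a $C^\infty(]0,T[\times\RR^d)$ function whose $x$-support still lies in the fixed ball, and which converges in $L^p(0,T;W^{k,q})$ as $\delta\to 0$ because translation is continuous in $L^p$ of a Banach space. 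A diagonal argument over the approximation index and the mollification parameter then yields the claimed sequence. The main technical nuisance — and the step I expect to cost the most care — is handling the time endpoints $0$ and $T$ so that the mollified functions are genuinely smooth on the open interval and still converge; a clean way is to first extend across $t=0$ and $t=T$ by reflection, mollify on a slightly larger interval, and then restrict.

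Part (b) is completely analogous, with the roles of the $t$- and $x$-variables interchanged: now we need smoothness and $W^{k,p}$-convergence in $t$, and only $L^q$-convergence in $x$. I would first mollify in $x$ (convolution with $\eta_\delta(x)$ commutes with $\delt^j$ and does not move mass out of any fixed ball, up to an $\eps$-enlargement, but does require a preliminary truncation $u\,\chi_{B_R(0)}$ in $x$ to get compact support — here one must check the truncation error is small in $W^{k,p}(0,T;L^q)$, which follows from dominated convergence in $t$ once $u(t)\in L^q$ for a.e.\ $t$ and $\|u(t)\|_{L^q}\in L^p(0,T)$), and then mollify in $t$ exactly as in part (a), again dealing with the endpoints by reflection. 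Smoothness in both variables follows because each mollification is a convolution with a smooth kernel in the respective variable, and the two convolutions commute.

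Part (c) is the classical identification $L^p(]0,T[\times\RR^d)\cong L^p(0,T;L^p(\RR^d))$: for $p<\infty$ this is immediate from Fubini–Tonelli, since $\int_0^T\!\!\int_{\RR^d}|u|^p\,\dx\,\dt=\int_0^T\|u(t)\|_{L^p}^p\,\dt$, together with the (routine) verification that a jointly measurable function has a.e.-defined, strongly measurable slices — I would cite Evans or Lieb–Loss for this rather than reprove it. For part (d), the inclusion "$\subseteq$" is clear: if $u\in W^{1,p}(]0,T[\times\RR^d)$ then by (c) $u\in L^p(0,T;L^p)$, its distributional derivatives $\delt u,\,\delx u$ are in $L^p$ hence (again by (c)) define elements of $L^p(0,T;L^p)$, and one checks from Fubini that $\delt u$ is the Banach-space weak time-derivative and $\delx u(t)$ is the weak spatial gradient of $u(t)$ for a.e.\ $t$, giving $u\in W^{1,p}(0,T;L^p)\cap L^p(0,T;W^{1,p})$. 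For the reverse inclusion one takes $u$ in the right-hand space, uses part (b) (or (a)) to get smooth approximants $u_j$ with uniform-in-$t$ compact $x$-support converging appropriately, notes that for smooth functions the classical and distributional derivatives agree and $\|\cdot\|_{W^{1,p}(]0,T[\times\RR^d)}$ is controlled by $\|\cdot\|_{W^{1,p}(0,T;L^p)}+\|\cdot\|_{L^p(0,T;W^{1,p})}$, and passes to the limit; since $W^{1,p}(]0,T[\times\RR^d)$ is complete, the limit lies there and equals $u$. The only place needing attention is that one genuinely has both time- and space-derivative control on the approximants simultaneously, which is why the density result must be invoked in the form that gives convergence in $W^{1,p}(0,T;L^p)\cap L^p(0,T;W^{1,p})$ — in practice one applies the construction of part (b) and observes that the same mollified sequence also converges in $L^p(0,T;W^{1,p})$ because spatial mollification converges in $W^{1,p}(\RR^d)$.
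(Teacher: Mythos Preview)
Your approach is correct and close to the paper's for parts (a) and (c): the paper also builds (a) from simple functions with values in $W^{k,q}$, replaces the values by $C^\infty_c(\RR^d)$-functions, and then smooths the time indicators; for (c) it uses Fubini plus the Pettis theorem on weak versus strong measurability, which is the ``routine verification'' you allude to.

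There are two places where the paper takes a somewhat different route. For (b), rather than truncating and mollifying in $x$ and then in $t$ by hand, the paper invokes the Meyers--Serrin theorem for Banach-space-valued Sobolev spaces (citing Kreuter) to first produce $v_j\in C^\infty(]0,T[;L^q)\cap W^{k,p}(0,T;L^q)$ close to $u$, and only then mollifies in $x$; this avoids your endpoint-reflection manoeuvre and the separate truncation step. For (d), the paper does not go through an approximation argument at all for the inclusion $W^{1,p}(0,T;L^p)\cap L^p(0,T;W^{1,p})\subset W^{1,p}(]0,T[\times\RR^d)$: it simply takes a tensor test function $\phi=\varphi(t)\psi(x)$ with $\varphi\in C^\infty_c(]0,T[)$, $\psi\in C^\infty_c(\RR^d)$, and checks directly via Fubini that $\dot u$ and $\nabla u$ are the distributional $\partial_t u$ and $\partial_x u$. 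This is shorter and sidesteps the issue you flag at the end --- needing simultaneous convergence of the approximants in both norms --- entirely. Your route works too, but the direct test-function argument is the cleaner one here.
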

\smallskip
The very technical proof is outsourced to the appendix.\\[-2mm]

In order to write down the three dimensional Vlasov-Poisson system concisely we will first define some operators and notations: For $d\in\NN$, $1\le p\le \infty$ and $r>0$ let $L^p_r(\RR^d)$ denote the set of functions $ \varphi\in L^p(\RR^d)$ having compact support $\supp \varphi\subset B_r(0)\subset\RR^d$. Then the operator
\begin{align}
\rho.\colon L^2_r(\RR^6) \to L^2(\RR^3),\;  \varphi \mapsto \rho_\varphi\quad\text{with}\quad \rho_\varphi(x):=\int  \varphi(x,v) \dv,\; x\in\RR^3
\end{align}
is linear and bounded. It also holds that $\rho_\varphi\in L^2_r(\RR^3)$ for any $ \varphi\in L^2_r(\RR^6)$. Let now $R>0$ be any arbitrary radius. From the Calderon-Zygmund inequality \linebreak\cite[p.\,230]{gilbarg-trudinger} we can conclude that
\begin{align}
\label{DEFPSI}
\psi.\colon L^2_r(\RR^6) \to H^2\big(\BR\big),\;  \varphi \mapsto \psi_\varphi \quad\text{with}\quad \psi_\varphi(x):= \int \frac{\rho_\varphi(y)}{|x-y|}\dy
\end{align}
is a linear and bounded operator. According to E. Lieb and M. Loss \cite[s.\,6.21]{lieb-loss}, the gradient of $\psi_\varphi$ is given by
\begin{align*}
\delx\psi_\varphi(x)= -\int \frac{x-y}{|x-y|^3}\;\rho_\varphi(y)\dy, \quad x\in \RR^3
\end{align*}
and then, because of \eqref{DEFPSI}, the operator
\begin{align}
\begin{aligned}
\delx\psi.\colon L^2_r(\RR^6) \to H^1\big(\BR;\RR^3\big),\;  \varphi \mapsto \delx\psi_\varphi \\
\end{aligned}
\end{align}
is also linear and bounded. Some more properties of the potential $\psi_\varphi$ and its field $\delx \psi_\varphi$ are given by the following lemma.

\begin{lem}
	\label{NPOT}
	Let $\varphi \in L^2_r(\RR^6)$ be arbitrary. 
	\begin{itemize}
		\item[\textnormal{(a)}] $\psi_\varphi\in H^2_{loc}(\RR^3)$ has a continuous representative and is the unique solution of the boundary value problem
		\begin{align*}
		-\laplace \psi_\varphi = 4\pi\rho_\varphi \;\;\text{a.e.\,on}\; \RR^3 ,\quad \underset{|x|\to\infty}{\lim} \psi_\varphi = 0.
		\end{align*}
		\item[\textnormal{(b)}] Let $1<p,q<\infty$ be any real numbers and suppose that additionally $\varphi\in L^p(\RR^6)$. Then ${\rho_\varphi \in L^p(\RR^3)}$ and there exists some constant $C>0$ depending only on $p$ and $r$ such that \begin{align*}
		\|\psi_\varphi\|_{L^p} &\le C\,\|\varphi\|_{L^q}, && \text{where}\quad \tfrac 1 q = \tfrac 2 3 +\tfrac 1 p\,, \text{ i.e.},\; q=\tfrac{3p}{2p+3}	\\
		\|\delx\psi_\varphi\|_{L^p} &\le C\,\|\varphi\|_{L^q}, && \text{where}\quad \tfrac 1 q = \tfrac 1 3 +\tfrac 1 p\,, \text{ i.e.},\; q=\tfrac{3p}{p+3}\\
		\|D_x^2 \psi_\varphi\|_{L^p} &\le C\,\|\varphi\|_{L^q } , && \text{where}\quad \tfrac 1 q = \tfrac 0 3 +\tfrac 1 p\,, \text{ i.e.},\; q=p .
		\end{align*}
		\item[\textnormal{(c)}] Suppose that additionally $\varphi\in L^\infty(\RR^6)$. Then $\rho_\varphi\in L^\infty(\RR^3)$ and there exists some constant $C>0$ depending only on $r$ such that $$\|\delx \psi\|_{L^\infty} \le C\,\|f\|_{L^\infty}.$$
		Moreover $\delx\psi_\varphi \in C^{0,\gamma}(\RR^3;\RR^3)$ for any $\gamma\in ]0,1[$ and there exists some constant $C>0$ depending only on $r$ such that 
		$$ |\delx\psi_\varphi(x) - \delx\psi_\varphi(y)| \le C\,\|\varphi\|_{L^\infty}\,|x-y|^\gamma, \quad x,y\in\RR^3. $$
	\end{itemize}
\end{lem}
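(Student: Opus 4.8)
The plan is to reduce all three assertions to classical estimates for the Newtonian potential. Write $\Gamma(x):=1/|x|$, so that $\psi_\varphi=\Gamma*\rho_\varphi$ and $\delx\psi_\varphi=-(x/|x|^3)*\rho_\varphi$. Two elementary observations about $\rho_\varphi$ will do most of the work: since $\supp\varphi\subset B_r(0)\subset\RR^6$, the density $\rho_\varphi$ is supported in $B_r(0)\subset\RR^3$, hence lies in $L^1\cap L^s(\RR^3)$ whenever $\varphi\in L^s(\RR^6)$; and by Hölder's inequality in $v$, $\|\rho_\varphi\|_{L^s(\RR^3)}\le|B_r|^{1-1/s}\|\varphi\|_{L^s(\RR^6)}$ for every $s\in[1,\infty]$. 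Thus any bound for $\psi_\varphi$ (or its derivatives) in terms of $\rho_\varphi$ immediately becomes one in terms of $\varphi$ with a constant depending only on $r$ and the relevant exponents.

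For part (a): the membership $\psi_\varphi\in H^2_{loc}(\RR^3)$ is already contained in the mapping property \eqref{DEFPSI}, which holds for every radius $R$, and since $H^2(\BR)\hookrightarrow C^{0,1/2}(\overline{\BR})$ in dimension three, $\psi_\varphi$ has a continuous representative. The identity $-\Delta\psi_\varphi=4\pi\rho_\varphi$ holds in $\mathcal{D}'(\RR^3)$ because $-\Delta\Gamma=4\pi\delta_0$ (test against $-\Delta\phi$ and use $\Gamma*(-\Delta\phi)=4\pi\phi$, or approximate $\rho_\varphi$ in $L^2_r$ by smooth functions and use boundedness of $\varphi\mapsto\psi_\varphi$), hence a.e.\ by the $H^2_{loc}$-regularity; the decay follows from $|\psi_\varphi(x)|\le\|\rho_\varphi\|_{L^1}/(|x|-r)$ for $|x|>r$. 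Uniqueness is Liouville's theorem: a harmonic function on $\RR^3$ vanishing at infinity is zero, so two solutions coincide.

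For part (b): I would recognize $\psi_\varphi=\Gamma*\rho_\varphi$ as a multiple of the Riesz potential $I_2\rho_\varphi$ and $|\delx\psi_\varphi|\le|x|^{-2}*|\rho_\varphi|$ as dominated by $I_1|\rho_\varphi|$. The Hardy--Littlewood--Sobolev inequality on $\RR^3$ then gives $\|\psi_\varphi\|_{L^p}\le C\|\rho_\varphi\|_{L^q}$ with $\tfrac1q=\tfrac23+\tfrac1p$ and $\|\delx\psi_\varphi\|_{L^p}\le C\|\rho_\varphi\|_{L^q}$ with $\tfrac1q=\tfrac13+\tfrac1p$, in the exponent ranges where both $p,q\in(1,\infty)$. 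The second derivatives are obtained from $\rho_\varphi$ by the singular integral operator $\partial_{x_i}\partial_{x_j}\Gamma*\,\cdot$, which is bounded on $L^p$ for $1<p<\infty$ by the Calderón--Zygmund inequality (already used above in its $L^2$-form), so $\|D_x^2\psi_\varphi\|_{L^p}\le C\|\rho_\varphi\|_{L^p}$. Composing each with $\|\rho_\varphi\|_{L^q}\le|B_r|^{1-1/q}\|\varphi\|_{L^q}$ finishes part (b).

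For part (c): bounding $\|\delx\psi_\varphi\|_{L^\infty}$ amounts to splitting $\delx\psi_\varphi(x)=-\int\frac{x-y}{|x-y|^3}\rho_\varphi(y)\dy$ at $|x-y|=1$ --- over $|x-y|<1$ the kernel $|x-y|^{-2}$ is integrable on a ball, over $|x-y|\ge1$ it is bounded and $\rho_\varphi\in L^1$ --- so both pieces are $\le C(r)\|\varphi\|_{L^\infty}$. For the Hölder seminorm, set $h:=|x-\bar x|$; if $h\ge1$ use the $L^\infty$-bound, and if $h<1$ split $\RR^3$ into the near ball $B_{2h}(x)$ and the far region $|x-y|\ge2h$. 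On the near ball each of the two occurring integrals is $\le C\|\varphi\|_{L^\infty}\int_{|z|\le3h}|z|^{-2}\dz\le C\|\varphi\|_{L^\infty}h\le C\|\varphi\|_{L^\infty}h^\gamma$. On the far region the mean value theorem yields $\big|\tfrac{x-y}{|x-y|^3}-\tfrac{\bar x-y}{|\bar x-y|^3}\big|\le C\,h\,|x-y|^{-3}$, and integrating, the part $2h\le|x-y|\le1$ contributes $\le C\|\varphi\|_{L^\infty}\,h\,|\log h|\le C_\gamma\|\varphi\|_{L^\infty}h^\gamma$, while the part $|x-y|>1$, finite and uniform in $x$ only because $\supp\rho_\varphi\subset B_r(0)$, contributes $\le C(r)\|\varphi\|_{L^\infty}h\le C(r)\|\varphi\|_{L^\infty}h^\gamma$. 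I expect this last part to be the only genuine obstacle: the bound must be uniform over all $x,\bar x\in\RR^3$, which forces careful treatment of the borderline integrals $\int h\,s^{-1}\,ds$ and $\int_{|x-y|>1}|x-y|^{-3}\dy$, and the appearance of the factor $h\,|\log h|$ is precisely why one cannot reach $\gamma=1$. Parts (a) and (b), by contrast, are bookkeeping on top of results already available in the excerpt together with the Hardy--Littlewood--Sobolev and Calderón--Zygmund inequalities.
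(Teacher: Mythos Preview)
Your proposal is correct and follows essentially the same approach as the paper: Calder\'on--Zygmund plus Sobolev embedding for (a), Hardy--Littlewood--Sobolev/Riesz-potential bounds (the paper cites Stein) together with Calder\'on--Zygmund for (b), and direct kernel estimates for (c). The only cosmetic differences are that for $\|D_x^2\psi_\varphi\|_{L^p}$ the paper combines the local Calder\'on--Zygmund bound on $B_{2r}(0)$ with an explicit $|x|^{-3}$ pointwise decay outside (rather than invoking the global $L^p$-boundedness of the singular integral as you do), and for the H\"older estimate in (c) the paper simply cites Lieb--Loss instead of carrying out the near/far splitting you describe.
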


\begin{proof}
	\textit{Item} (a): The Calderon-Zygmund inequality (\cite[p.\,230]{gilbarg-trudinger}) states that $\psi_\varphi$ is in $H^2_{loc}(\RR^3)$ and satisfies $	-\laplace \psi_\varphi = 4\pi\rho_\varphi$ almost everywhere on $\RR^3$. By Sobolev's inequality, $\psi_\varphi$ has a continuous representative and one can easily show that $\psi_\varphi(x) \to 0$ if $|x|\to \infty$, i.e., $\psi_\varphi$ satisfies the boundary value problem. Any other solution is then given by $\psi_\varphi + h$ where $\laplace h = 0$ almost everywhere and $h$ satisfies the boundary condition. Then, by Weyl's lemma, $h$ is a harmonic function and thus $h=0$ which means uniqueness. This proves (a). \newpage
	
	\textit{Item} (b): $\rho_\varphi\in L^p(\RR^6)$ with $\|\rho_\varphi\|_{L^p} \le C\, \|\varphi\|_{L^p}$ is a direct consequence of Jensen's inequality. The first two inequalities are already established by \linebreak E.~Stein~\cite[p.\,119]{stein}. Note that the Calderon-Zygmund inequality also yields $\|D^2 \psi_\varphi \|_{L^p(B_{2r}(0))} \le C\, \|\varphi\|_{L^p}$. Moreover, if $|x|\ge 2r$,
	\begin{align*}
	|\delxi \delxj \psi_\varphi(x)| \le \int\limits_{|y|<r} \frac{C}{|x-y|^3}\, |\rho_\varphi(y)| \dy \le C\, |x|^{-3}\, \|\rho_\varphi\|_{L^1} \le C\, |x|^{-3}\, \|\varphi\|_{L^p}\,.
	\end{align*}
	Thus $\|D^2 \psi_\varphi \|_{L^p(\RR^3\setminus B_{2r}(0))} \le C\, \|\varphi\|_{L^p}$ which completes the proof of (b).\\[-2mm]
	
	\textit{Item} (c): $\rho_\varphi \in L^\infty(\RR^6)$ is obvious. It holds that
	\begin{align*}
	|\delx\psi_\varphi(x)| &\le \|\rho_\varphi\|_{L^\infty} \int\limits_{|y|<r} |x-y|^{-2} \dy \le C\,\|\varphi\|_{L^\infty} \int\limits_{|y|<3r} |y|^{-2} \dy, && |x|\le 2r,\\
	|\delx\psi_\varphi(x)| &\le \int\limits_{|y|<r} \frac{|\rho_\varphi(y)|}{|x-y|^2} \dy \le C\, |x|^{-2}\, \|\rho_\varphi\|_{L^1} \le C\, |x|^{-2}\, \|\varphi\|_{L^\infty}, && |x|> 2r
	\end{align*}
	and then $\|\delx\psi_\varphi\|_{L^\infty} \le C\,\|\varphi\|_{L^\infty}$ immediately follows. The second assertion is established by E. Lieb and M. Loss \cite[s.\,10.2]{lieb-loss}.
\end{proof}

\smallskip

We will also use the notation $\rho_f$, $\psi_f$ and $\delx\psi_f$ for functions $f=f(t,x,v)$ with $t\ge 0$, $x,v\in\RR^3$. In this case we will write
\begin{align*}
\rho_f(t,x) = \rho_{f(t)}(x),\qquad
\psi_f(t,x) = \psi_{f(t)}(x),\qquad
\delx\psi_f(t,x) = \delx\psi_{f(t)}(x)
\end{align*}
for any $t$ and $x$. As already mentioned in the introduction we consider the following initial value problem:\\
\begin{equation}
\label{VPSU}
\begin{cases}
\partial_t f + v\cdot \partial_x f - \partial_x \psi_f \cdot \partial_v f + (v\times B) \cdot \partial_v f= 0 & \text{ on }[0,T]\times \RR^6\;,\\[0.25cm]
f|_{t=0} = \mathring f & \text{ on }\RR^6\,.
\end{cases}
\end{equation}
In the following let $T>0$ and $\mathring f\in C^2_c(\RR^6;\RR_0^+)$ be arbitrary but fixed. Let $B=B(t,x)$ be a given external magnetic field and let $f=f(t,x,v)$ denote the distribution function that is supposed to be controlled. Its electric field $\delx\psi_f$\linebreak$=\delx\psi_f(t,x)$ is formally defined as described above. In the following we will show that the solution $f$ satisfies the required condition "${f(t)=f(t,\cdot,\cdot)\in L^2_r(\RR^6)}$" that ensures $\rho_f$, $\psi_f$ and $\delx\psi_f$ to be well defined. Of course this is possible only if the magnetic field $B$ is regular enough. The regularity of those fields will be specified in the following section.

\section{Admissible fields and the field-state operator}

\subsection{The set of admissible fields}

We will now introduce the set our magnetic fields will belong to: The set of admissible fields. For $T>0$ and $\beta>3$ let $\WW=\WW(\beta)$ denote the reflexive Banach space $L^2\big(0,T;W^{2,\beta}(\RR^3;\RR^3)\big)$, let $\HH$ denote the Hilbert space $L^2\big(0,T;H^1(\RR^3;\RR^3)\big)$ and let $\|\cdot\|_\WW$ and $\|\cdot\|_\HH$ denote their standard norms. Then $\VV:=\WW \cap \HH$ with $\|\cdot\|_\VV := \|\cdot\|_\WW + \|\cdot\|_\HH$ is also a Banach space.

%
%

\begin{defn}
	\label{DAC}
	Let $K>0$ and $3<\beta<\infty$ be arbitrary fixed constants. Then,
	\begin{align*}
	\BB := \lbr B\in \VV \Bigvert \|B\|_\VV \le K \rbr
	\end{align*}
	is called the \textbf{set of admissible fields}.
\end{defn}

\begin{com}
	In the approach of Section 3 and 4 it would be sufficient to consider fields $B\in\WW$ with $\|B\|_\WW \le K$. However, in the model that is discussed in Section 5 the regularity $B\in\VV$ will be necessary. Therefore we will use this condition right from the beginning.
\end{com}

The most important properties of the set of admissible fields are listed in the following lemma.

\vspace{-2mm}

%
%

\begin{lem}
	\label{LAC}
	The set of admissible fields $\BB$ has the following properties:
	\begin{itemize}
		\item[\textnormal{(a)}]
		$\BB$ is a bounded, convex and closed subset of $\VV$.
		\item[\textnormal{(b)}]
		The space $W^{j,\beta}(\RR^3;\RR^3)$ is continuously embedded in $C^{j-1,\gamma}(\RR^3;\RR^3)$ for $j\in\NN$ and $\gamma=\gamma(\beta)=1-\tfrac{3}{\beta}$. Thus there exist constants $k_0,k_1>0$ depending only on $\beta$ such that for all $B\in\BB$,
		$$ \|B(t)\|_{C^{0,\gamma}} \le k_0\; \|B(t)\|_{W^{1,\beta}}, \quad \|B(t)\|_{C^{1,\gamma}} \le k_1\; \|B(t)\|_{W^{2,\beta}}$$
		for almost all $t\in[0,T]$. Moreover for any $r>0$ there exist constants $k_2,k_3>0$ depending only on $\beta$ and $r$ such that for all $B\in\BB$,
		\begin{align*}
		\|B(t)\|_{C^{0,\gamma}(B_r(0))} &\le k_2\; \|B(t)\|_{W^{1,\beta}(B_r(0))}, \\ \|B(t)\|_{C^{1,\gamma}(B_r(0))} &\le k_3\; \|B(t)\|_{W^{2,\beta}(B_r(0))}
		\end{align*}
		for almost all $t\in[0,T]$. 	
		\item[\textnormal{(c)}] The space $\WW$ is continuously embedded in $L^2(0,T;C^{1,\gamma})$. Thus for all \linebreak $B\in\BB$ it holds that $B\in L^2(0,T;C^{1,\gamma})$ with $ \|B\|_{L^2(0,T;C^{1,\gamma})} \le k_1 K \;. $
		\item[\textnormal{(d)}]
		Let $\MM$ denote the set
		\begin{align*}
		\left\{ B \in C^\infty([0,T]\times\RR^3;\RR^3)) \left|
		\begin{aligned}
		&\|B\|_\WW \le 2 K \;\text{and}\; \exists m>0\\
		&  \forall \in[0,T] : \supp B(t) \subset B_m(0)\subset \RR^3 
		\end{aligned}
		\right. \right\}.
		\end{align*}
		Then for any $B\in \BB$, there exists a sequence $(B_k)_{k\in\NN}\subset\MM$ such that
		$$\|B-B_k\|_\WW\to 0,\quad k\to\infty\;.$$
		\item[\textnormal{(e)}]
		$\BB\subset \VV$ is weakly compact, i.e., any sequence in $\BB$ contains a subsequence converging weakly in $\VV$ to some limit in $\BB$.
	\end{itemize}
\end{lem}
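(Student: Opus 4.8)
The plan is to verify the five items essentially independently, leaning on standard functional-analytic facts for (a), (c), (e) and on Sobolev embedding / Morrey's inequality for (b), and on a mollification argument for (d).

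For item (a): boundedness is immediate from the definition $\BB = \{B\in\VV : \|B\|_\VV \le K\}$, and convexity follows since $\|\cdot\|_\VV$ is a norm (a convex combination of elements with norm $\le K$ again has norm $\le K$). Closedness: since $\VV = \WW\cap\HH$ with $\|\cdot\|_\VV = \|\cdot\|_\WW + \|\cdot\|_\HH$ is a Banach space, the closed ball of radius $K$ is closed; equivalently, if $B_k\to B$ in $\VV$ with $\|B_k\|_\VV\le K$, then $\|B\|_\VV = \lim\|B_k\|_\VV \le K$ by continuity of the norm. For item (b): this is Morrey's inequality. The embedding $W^{j,\beta}(\RR^3;\RR^3)\hookrightarrow C^{j-1,\gamma}(\RR^3;\RR^3)$ for $\beta>3$ and $\gamma = 1-\tfrac 3\beta$ is the classical Sobolev embedding theorem (see e.g.\ \cite{evans} or \cite{lieb-loss}); applying it to $B(t)$ for a.e.\ $t\in[0,T]$ gives the first two pointwise-in-$t$ estimates with constants $k_0,k_1$ depending only on $\beta$. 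The local versions on $B_r(0)$ follow from the corresponding Sobolev embedding on bounded domains (using an extension operator), with constants now also depending on $r$; alternatively one derives them from the global ones by a cutoff. For item (c): integrate the square of the second inequality in (b) over $t\in[0,T]$: $\|B\|_{L^2(0,T;C^{1,\gamma})}^2 = \int_0^T \|B(t)\|_{C^{1,\gamma}}^2\dt \le k_1^2\int_0^T\|B(t)\|_{W^{2,\beta}}^2\dt = k_1^2\|B\|_\WW^2 \le k_1^2K^2$, which gives the continuous embedding $\WW\hookrightarrow L^2(0,T;C^{1,\gamma})$ and the bound $\|B\|_{L^2(0,T;C^{1,\gamma})}\le k_1K$.

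For item (d): given $B\in\BB\subset\WW = L^2(0,T;W^{2,\beta}(\RR^3;\RR^3))$, first truncate in the spatial variable by multiplying with a smooth cutoff $\chi_m(x)$ equal to $1$ on $B_m(0)$, supported in $B_{2m}(0)$; as $m\to\infty$ this converges to $B$ in $\WW$ (dominated convergence in $t$ together with $\|\chi_m B(t) - B(t)\|_{W^{2,\beta}}\to 0$ for a.e.\ $t$), so for large $m$ the truncation has $\WW$-norm $\le \tfrac 32 K$, say. Then mollify the truncated field jointly in $(t,x)$ (extending in $t$ by reflection or zero near the endpoints, or mollifying only in $x$ if one wants to preserve compact $t$-support bookkeeping); the mollified fields are in $C^\infty$, keep spatial support in a fixed ball $B_m(0)$ (up to enlarging slightly), and converge in $\WW$, so eventually their $\WW$-norm is $\le 2K$. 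A diagonal argument then produces the desired sequence $(B_k)\subset\MM$ with $\|B-B_k\|_\WW\to 0$. One should invoke Lemma~\ref{SOBBOCH}(a) here, which provides exactly this kind of approximation by smooth, spatially compactly supported functions in $L^p(0,T;W^{k,q})$; the only extra work is to control the $\WW$-norm so that it stays below $2K$, which is automatic once the approximation is close enough. For item (e): $\VV$ is a reflexive Banach space (it is $\WW\cap\HH$, the intersection of two reflexive spaces, with the sum norm — reflexivity follows since $\VV$ embeds isometrically as a closed subspace of the reflexive product $\WW\times\HH$ via $B\mapsto(B,B)$). By the Eberlein--Šmulian theorem, bounded sequences in a reflexive space have weakly convergent subsequences, so any sequence in $\BB$ has a subsequence converging weakly in $\VV$; and since $\BB$ is convex and closed (item (a)), it is weakly closed by Mazur's theorem, so the weak limit lies in $\BB$.

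The only genuinely delicate point is item (d): one must arrange the mollification and truncation so that smoothness holds up to $t=0$ and $t=T$ (hence the "$C^\infty([0,T]\times\RR^3)$" in the definition of $\MM$), the spatial support stays inside a single ball uniformly in $t$, and the $\WW$-norm does not exceed $2K$ in the limit. All three are standard once handled in the right order — cut off in space first, extend/mollify in time second — and Lemma~\ref{SOBBOCH}(a) does most of the heavy lifting; everything else in the lemma is a direct application of Sobolev embedding, continuity of norms, and reflexivity plus Mazur's theorem.
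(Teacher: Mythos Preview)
Your proof is correct and follows essentially the same path as the paper: items (a)--(c) are handled identically (Sobolev/Morrey embedding and integration in $t$), and (d) is reduced to Lemma~\ref{SOBBOCH}(a) exactly as the paper does, with the norm bound $\le 2K$ coming for free once the approximation is close enough. For (e) you argue via reflexivity of $\VV$ (closed diagonal in the reflexive product $\WW\times\HH$) together with Eberlein--\v{S}mulian and Mazur, while the paper instead extracts weakly convergent subsequences first in $\WW$ and then in $\HH$, identifies the limits, and concludes with weak lower semicontinuity of $\|\cdot\|_\VV$; these are two standard packagings of the same weak-compactness fact and neither offers a real advantage over the other.
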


\begin{proof} 
	(a) is obvious and (b) is a direct consequence of Sobolev's embedding theorem and the fact that for all $B\in\BB$, $B(t)\in W^{2,\beta}(\RR^3;\RR^3)$ for almost all $t\in[0,T]$. Then the $k_1$-inequality of (b) immediately implies (c). (d) follows instantly from Lemma \ref{SOBBOCH}\,(a). Without loss of generality, we can assume that $\|B\|_\WW\le 2K$. As $\BB$ is a bounded subset of $\WW$ the Banach-Alaoglu theorem implies that any sequence ${(B_k)\subset\BB}$ contains a subsequence $(B_k^*)$ converging weakly to some limit $B\in \WW$. Now, $(B_k^*)$ is a bounded sequence in $\HH$ and thus it has a subsequence $(B_k^{**})$ that converges weakly to some limit in $\HH$. Because of uniqueness, this limit must be $B$. Hence $B^{**}_k\wto B$ in $\VV$ and since the norm $\|\cdot\|_\VV$ is weakly lower semicountinuous it follows that $\|B\|_\VV \le K$. That is (e).
\end{proof}

\subsection{The characteristic flow of the Vlasov equation}

Since the Vlasov equation is a first-order partial differential equation, it suggests itself to consider the characteristic system.
On that point, we will consider a general version of the Vlasov equation,
\begin{equation}
\label{VEF}
\partial_t f + v\cdot \partial_x f + F \cdot \partial_v f + v \times G \cdot \partial_v f= 0,
\end{equation}
with given fields $F=F(t,x)$ and $G=G(t,x)$. Then the following holds:

%
%

\begin{lem}
	\label{CHS}
	Let $I\subset \mathbb R$ be an interval and let $F,G\in C(I\times\mathbb R^3;\mathbb R^3)$ be continuously differentiable with respect to $x$ and bounded on $J\times \mathbb R^3$ for every compact subinterval $J\subset I$. Then for every $t\in I$ and $z=(x,v)\in \mathbb R^6$ there exists a unique solution $I\ni s\mapsto (X,V)(s,t,x,v)$ of the characteristic system
	\begin{equation}
	\dot x = v,\quad
	\dot v = F(s,x) + v\times G(s,x)
	\end{equation}
	to the initial condition $(X,V)(t,t,x,v)=(x,v)$. The characteristic flow \linebreak$Z:=(X,V)$ has the following properties:
	\begin{itemize}
		\item[\textnormal{(a)}]
		$Z:I\times I\times \mathbb R^6 \to \mathbb R^6$ is continuously differentiable.
		\item[\textnormal{(b)}]
		For all $s,t\in I$ the mapping $Z(s,t,\cdot):\mathbb R^6 \to \mathbb R^6$ is a $C^1$-diffeomorphism with inverse $Z(t,s,\cdot)$, and $Z(s,t,\cdot)$ is measure preserving, i.e.,
		\begin{equation*}
		\det \frac{\partial Z}{\partial z} (s,t,z) = 1, \quad s,t\in I,\; z\in\RR^6\;.
		\end{equation*}
	\end{itemize}
\end{lem}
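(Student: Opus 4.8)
The plan is to read the characteristic system as a first-order ODE in $\RR^6$ for $Z=(X,V)$ with right-hand side $\mathcal F(s,z):=\big(v,\,F(s,x)+v\times G(s,x)\big)$, $z=(x,v)$, and to invoke the classical theory of ordinary differential equations. First I would record that, by hypothesis, $\mathcal F$ is continuous on $I\times\RR^6$, is continuously differentiable with respect to $z$, and that $\partial_z\mathcal F$ is bounded on $J\times\RR^6$ for every compact subinterval $J\subset I$: indeed the $x$-derivatives of $F$ and $G$ are bounded on such $J\times\RR^3$, while the $v$-dependence of $\mathcal F$ is affine. Hence the Picard--Lindel\"of theorem provides, for every $t\in I$ and $z\in\RR^6$, a unique maximal solution $s\mapsto Z(s,t,z)$ with $Z(t,t,z)=z$.

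The decisive point for extending this maximal solution to all of $I$ is an a priori bound, and here I would use that the magnetic force is orthogonal to the velocity: since $\big(v\times G(s,x)\big)\cdot v=0$, any solution satisfies $\dds\tfrac12|V(s)|^2 = V(s)\cdot F(s,X(s)) \le |V(s)|\,\sup_{J\times\RR^3}|F|$ on a compact $J\subset I$ containing $t$, so $|V(s)|$ grows at most affinely in $|s-t|$, and then $\dot X=V$ forces the same for $|X(s)|$. Thus the solution stays in a bounded set on $J$ and cannot blow up there; since $J$ was arbitrary, it exists on all of $I$.

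Property (a) then follows from the standard theorem on $C^1$-dependence of solutions on the initial time and the initial value: $\partial_z Z$ and $\partial_t Z$ satisfy the variational (linearised) equation with coefficient $\partial_z\mathcal F\big(s,Z(s,t,z)\big)$, which is continuous, so they are continuous, and $\partial_s Z=\mathcal F(s,Z)$ is continuous as well; hence $Z\in C^1(I\times I\times\RR^6)$. For (b), uniqueness yields the flow identity $Z\big(t,s,Z(s,t,z)\big)=z$ and, symmetrically, $Z\big(s,t,Z(t,s,z)\big)=z$ for all $s,t\in I$, $z\in\RR^6$; together with (a) this shows $Z(s,t,\cdot)$ is a $C^1$-diffeomorphism with inverse $Z(t,s,\cdot)$. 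The measure-preserving identity is Liouville's theorem applied to the divergence-free field $\mathcal F$: one computes $\divergence_z\mathcal F = \divergence_x v + \divergence_v F(s,x) + \divergence_v\big(v\times G(s,x)\big) = 0$, the first two terms vanishing trivially and the last because $\partial_{v_i}(v\times G)_i = \partial_{v_i}(\varepsilon_{ijk}v_jG_k)=\varepsilon_{ijk}\delta_{ij}G_k=\varepsilon_{iik}G_k=0$. Consequently $s\mapsto\det\partial_z Z(s,t,z)$ solves $\dds\det\partial_z Z = (\divergence_z\mathcal F)\big(s,Z\big)\,\det\partial_z Z = 0$ with value $\det\mathrm{Id}=1$ at $s=t$, hence equals $1$ throughout.

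I do not expect a genuine obstacle: the argument is classical ODE theory throughout. The only step requiring a little attention is the global-existence estimate — one must make sure the affine-growth bound is genuinely uniform on each compact time interval so that the maximal solution really fills the whole (possibly unbounded) interval $I$ — and the orthogonality of the Lorentz force to $v$ is precisely what keeps this estimate clean and avoids a Gr\"onwall-type exponential.
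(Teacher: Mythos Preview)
The paper does not actually prove this lemma: immediately after Lemma~\ref{FL2} it remarks that for $G=0$ the proofs appear in Rein's survey and that the general case ``proceeds analogously.'' Your sketch is precisely the standard argument one would supply here, and it is correct in outline and spirit.

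One small inaccuracy is worth flagging: you assert that $\partial_z\mathcal F$ is bounded on $J\times\RR^6$ because ``the $x$-derivatives of $F$ and $G$ are bounded on such $J\times\RR^3$.'' Neither part is quite right. The hypothesis bounds $F$ and $G$ themselves on $J\times\RR^3$, not their $x$-derivatives; and even granting bounded $\partial_x G$, the block $\partial_x\big(v\times G(s,x)\big)=v\times\partial_x G(s,x)$ grows linearly in $v$, so $\partial_z\mathcal F$ is never bounded on all of $J\times\RR^6$. Fortunately this does not damage the argument: $\mathcal F$ is $C^1$ in $z$, hence locally Lipschitz uniformly on compact time intervals, which is all Picard--Lindel\"of requires for local existence and uniqueness. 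Your energy estimate $\tfrac{\d}{\d s}\tfrac12|V|^2 = V\cdot F$ (or, equivalently, the linear-growth bound $|\mathcal F(s,z)|\le C_J(1+|z|)$ coming directly from the boundedness of $F$ and $G$) then confines the solution to a compact set on each $J$, so the maximal solution fills $I$ and the coefficient $\partial_z\mathcal F\big(s,Z(s,t,z)\big)$ of the variational equation is bounded on compacts, making the $C^1$-dependence argument for (a) go through. The remedy is simply to replace ``$\partial_z\mathcal F$ bounded on $J\times\RR^6$'' by ``$\mathcal F$ locally Lipschitz in $z$, uniformly for $s\in J$.''
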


The relation between the characteristic flow and the solution $f$ of the Vlasov equation \eqref{VEF} is described by the following lemma.

%
%

\begin{lem}
	\label{FL2}
	Under the assumptions of Lemma \ref{CHS} the following holds:
	\begin{itemize}
		\item[\textnormal{(a)}]
		A function $f\in C^1(I\times\mathbb R^6)$ satifies the Vlasov equation \eqref{VEF} iff it is constant along every solution of the characteristic system \eqref{CHS}.
		\item[\textnormal{(b)}]
		Suppose that $0\in I$. For $\mathring f\in C^1(\mathbb R^6)$ the function $f(t,z):=\mathring f(Z(0,t,z))$, $t\in I,z\in \mathbb R^6$
		is the unique solution of \eqref{VEF} in the space $C^1(I\times \mathbb R^6)$ with $f(0)=\mathring f$. If $\mathring f$ is nonnegative then so is f. For all $t\in I$ and $1\le p\le \infty$, 
		$$\textnormal{supp}f(t) = Z(t,0,\textnormal{supp} \mathring f) \tand \|f(t)\|_p = \|\mathring f\|_p\,.$$
		
	\end{itemize}
\end{lem}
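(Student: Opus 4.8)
The statement to prove is Lemma \ref{FL2}, which characterizes solutions of the generalized Vlasov equation \eqref{VEF} via the characteristic flow $Z$ from Lemma \ref{CHS}. The plan is to treat item (a) first by a direct differentiation argument along characteristics, and then to derive item (b) as an essentially immediate consequence, with the norm and support identities following from the measure-preserving property in Lemma \ref{CHS}(b).

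For part (a), first I would fix $f\in C^1(I\times\RR^6)$ and an arbitrary solution $s\mapsto Z(s,t_0,z_0)=(X,V)(s,t_0,z_0)$ of the characteristic system \eqref{CHS}. Define $g(s):=f(s,Z(s,t_0,z_0))$. Since $f\in C^1$ and $Z$ is $C^1$ in $s$ (indeed $C^1$ jointly, by Lemma \ref{CHS}(a)), $g$ is $C^1$ and the chain rule gives
\begin{align*}
\dot g(s) = \delt f(s,X,V) + \dot X\cdot\delx f(s,X,V) + \dot V\cdot\delv f(s,X,V).
\end{align*}
Substituting $\dot X = V$ and $\dot V = F(s,X) + V\times G(s,X)$ from \eqref{CHS}, the right-hand side becomes exactly $\big(\delt f + v\cdot\delx f + F\cdot\delv f + v\times G\cdot\delv f\big)$ evaluated at $(s,X,V)$. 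Hence if $f$ solves \eqref{VEF} then $\dot g\equiv 0$, so $f$ is constant along that characteristic; since the characteristic and its starting data were arbitrary, this gives one direction. Conversely, if $f$ is constant along every characteristic, then $\dot g(s_0)=0$ in particular at $s_0=t_0$, which reads $\big(\delt f + v\cdot\delx f + F\cdot\delv f + v\times G\cdot\delv f\big)(t_0,z_0)=0$; since $(t_0,z_0)\in I\times\RR^6$ was arbitrary (every point lies on the characteristic through it), $f$ solves \eqref{VEF} everywhere. This proves (a).

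For part (b), assume $0\in I$ and set $f(t,z):=\mathring f(Z(0,t,z))$. By Lemma \ref{CHS}(a), $Z$ is $C^1$, so $f\in C^1(I\times\RR^6)$, and $f(0,z)=\mathring f(Z(0,0,z))=\mathring f(z)$. To see $f$ solves \eqref{VEF}, note that along a characteristic $s\mapsto Z(s,t,z)$ one has, using the flow (group) property $Z(0,s,Z(s,t,z))=Z(0,t,z)$ from Lemma \ref{CHS}(b), that $f(s,Z(s,t,z))=\mathring f(Z(0,s,Z(s,t,z)))=\mathring f(Z(0,t,z))$ is independent of $s$; hence $f$ is constant along every characteristic, and by part (a) it solves \eqref{VEF}. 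For uniqueness, if $\tilde f\in C^1(I\times\RR^6)$ also solves \eqref{VEF} with $\tilde f(0)=\mathring f$, then by (a) it is constant along characteristics, so $\tilde f(t,z)=\tilde f\big(0,Z(0,t,z)\big)=\mathring f(Z(0,t,z))=f(t,z)$. Nonnegativity of $f$ when $\mathring f\ge 0$ is clear from the defining formula. For the support identity: $f(t,z)\neq 0$ iff $\mathring f(Z(0,t,z))\neq 0$ iff $Z(0,t,z)\in\supp\mathring f$ iff $z\in Z(t,0,\supp\mathring f)$ (using that $Z(t,0,\cdot)$ is the inverse of $Z(0,t,\cdot)$, Lemma \ref{CHS}(b)); taking closures and using that $Z(t,0,\cdot)$ is a homeomorphism gives $\supp f(t)=Z(t,0,\supp\mathring f)$. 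Finally, for $1\le p<\infty$, the change of variables $z'=Z(0,t,z)$ together with $\det\frac{\partial Z}{\partial z}\equiv 1$ yields $\int|f(t,z)|^p\dz = \int|\mathring f(Z(0,t,z))|^p\dz = \int|\mathring f(z')|^p\dz'$, i.e. $\|f(t)\|_p=\|\mathring f\|_p$; the case $p=\infty$ follows since $Z(0,t,\cdot)$ is a bijection of $\RR^6$.

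The argument is essentially routine; there is no serious obstacle, since all the heavy lifting (existence, uniqueness, regularity and the measure-preserving property of the characteristic flow) is already contained in Lemma \ref{CHS}. The only points requiring a little care are the correct use of the flow/group identity $Z(0,s,Z(s,t,z))=Z(0,t,z)$ when verifying that the candidate solution in (b) is constant along characteristics, and the justification of the change of variables in the $L^p$ identity, which is legitimate because $Z(0,t,\cdot)$ is a $C^1$-diffeomorphism with unit Jacobian.
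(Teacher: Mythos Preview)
Your proof is correct and follows exactly the standard method-of-characteristics argument that the paper has in mind: the paper does not spell out a proof but simply refers to Rein \cite[p.\,394]{rein} for the case $G=0$ and notes that the general case is analogous, and your argument is precisely that analogous proof. The only (harmless) imprecision is in the support computation, where ``$f(t,z)\neq 0$ iff $Z(0,t,z)\in\supp\mathring f$'' should read ``iff $Z(0,t,z)\in\{\mathring f\neq 0\}$'', but your subsequent closure-and-homeomorphism step makes the conclusion correct regardless.
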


For $G=0$ the proofs of both lemmata are presented by G. Rein \cite[p.\,394]{rein}. The proofs for a general field $G\in C(I\times\mathbb R^3;\mathbb R^3)$ proceed analogously.

\subsection{Classical solutions for smooth external fields}

As already mentioned in the introduction, the standard initial value problem $(\eqref{VP2}, \eqref{IC})$ posesses a unique global classical solution. This result holds true if the Vlasov equation is equipped with an external magnetic field $B$ in $C\big([0,T];C^1_b(\RR^3;\RR^3)\big)$ which will be established in the next theorem. Unfortunately the proof does not work if the field is merely an element of $\BB$. Since such fields are only $L^2$ in time, the same holds for the right-hand side of the characteristic system. This makes it impossible to determine a solution in the classical sense of ordinary differential equations. However, we can approximate any field $B\in \BB$ by a sequence $(B_k)_{k\in\NN}\subset\MM \subset C\big([0,T];C^1_b(\RR^3;\RR^3)\big)$ according to Lemma \ref{LAC}. This allows us to construct a certain kind of strong solution to the field $B$ as a limit of the classical solutions that are induced by the fields $B_k$.

%
%

\begin{thm}
	\label{GCS}
	Let $B \in C\big([0,T];C^1_b(\RR^3;\RR^3)\big)$ with $\|B\|_\WW\le 2K$ be arbitrary. Then the initial value problem \eqref{VPSU} possesses a unique classical solution \linebreak ${f\in C^1([0,T]\times\RR^6)}$. Moreover for all $t\in[0,T]$, ${f(t)=f(t,\cdot,\cdot)}$ is compactly supported in $\mathbb R^6$ in such a way that there exists some constant $R>0$ depending only on $T$, $\mathring f$, $K$ and $\beta$ such that for all $t\in[0,T]$,
	\begin{equation*}
	\textnormal{supp } f(t) \subset B_R^6(0) = \left\{(x,v)\in\mathbb R^6 : |(x,v)| < R\right\}.
	\end{equation*}
	If $B \in C\big([0,T];C^2_b(\RR^3;\RR^3)\big)$, then additionally  $f\in C\big([0,T];C^2_b(\RR^6)\big)$.
\end{thm}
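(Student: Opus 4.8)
The plan is to adapt the classical Pfaffelmoser--Schaeffer global existence theory (in the streamlined form presented in \cite{rein}) to the situation where the Vlasov equation carries the additional Lorentz term $(v\times B)\cdot\partial_v f$. First I would set up local existence by iteration: define $f_0(t,z):=\mathring f$ and, given $f_n$, let $f_{n+1}(t,z):=\mathring f\big(Z_n(0,t,z)\big)$ where $Z_n$ is the characteristic flow associated with the fields $F:=-\partial_x\psi_{f_n}$ and $G:=B$. Lemma \ref{CHS} and Lemma \ref{FL2} apply at each stage provided $-\partial_x\psi_{f_n}$ is $C^1$ in $x$ and bounded on $[0,T]\times\RR^3$; since $\mathring f\in C^2_c$ and $B\in C([0,T];C^1_b)$, one checks inductively that each $f_n$ lies in $C^1([0,T]\times\RR^6)$ with $\|f_n(t)\|_\infty=\|\mathring f\|_\infty$ and with a time-dependent velocity support bound, so that $\rho_{f_n}\in L^\infty\cap L^1$ and Lemma \ref{NPOT}(c) gives the needed regularity and bounds on the field. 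Standard Gronwall estimates on the differences $f_{n+1}-f_n$ and on the differences of the characteristic flows yield a Cauchy sequence on a short time interval, hence a local $C^1$ solution; uniqueness on the interval of existence follows the same way.

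Next I would establish the continuation criterion: a local solution extends as long as the velocity support $P(t):=\sup\{|v| : (x,v)\in\operatorname{supp}f(t)\}$ stays finite. This is where Lemma \ref{FL2}(b) is essential --- it gives $\operatorname{supp}f(t)=Z(t,0,\operatorname{supp}\mathring f)$ and $\|f(t)\|_p=\|\mathring f\|_p$ for all $p$ --- together with the fact that along characteristics $|\dot v|\le |\partial_x\psi_f(s,X)|+|V||B(s,X)|$, the first term controlled via Lemma \ref{NPOT}(c) by $\|f(s)\|_\infty=\|\mathring f\|_\infty$ once one knows $\operatorname{supp}f(s)\subset B_r^6(0)$, and the second term controlled by $\|B(s)\|_\infty\le$ const (using $B\in C([0,T];C^1_b)$, or the embedding in Lemma \ref{LAC}(b) combined with $\|B\|_\WW\le 2K$). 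So the whole argument reduces to an a priori bound on $P(t)$.

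The main obstacle --- and the technical heart --- is precisely this a priori bound on $P(t)$ on the \emph{whole} interval $[0,T]$. For the field-free Vlasov--Poisson system this is the celebrated Pfaffelmoser/Schaeffer estimate; I would follow Schaeffer's argument as reproduced in \cite{rein}, which controls the spatial charge density $\|\rho_f(t)\|_\infty$ and the field $\|\partial_x\psi_f(t)\|_\infty$ in terms of $P(t)$ (using the interpolation-type estimate for $\partial_x\psi_f$ that splits the singular kernel into near, intermediate and far regions), and then bounds the growth of $P$ along characteristics by carefully tracking how much two characteristics can separate over a short time. The new feature is the term $v\times B$: since $(v\times B)\cdot v = 0$, the magnetic field does no work, so in the differential inequality for $\tfrac{d}{ds}|V(s)|^2$ the Lorentz term drops out entirely and contributes \emph{nothing} to the growth of $|V|$ --- it affects only the spatial positions $X(s)$, shifting them by an amount bounded by $\int_0^t|V(s)||B(s,X(s))|\,ds$, which is harmless for the density estimates because those only use that $B$ is bounded. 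Hence the Schaeffer bound goes through with constants now additionally depending on $\|B\|_{C([0,T];C^1_b)}$, or more precisely on $K$, $\beta$, $T$ and $\mathring f$ as claimed, once one observes that $\sup_{t}\|B(t)\|_\infty$ can be estimated by $K$ and $\beta$ via Lemma \ref{LAC}(b). This yields a finite $R$ with $\operatorname{supp}f(t)\subset B_R^6(0)$ for all $t\in[0,T]$, and combining with the continuation criterion gives global existence on $[0,T]$.

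Finally, for the regularity upgrade when $B\in C([0,T];C^2_b)$: once the solution $f\in C^1$ with the uniform compact support is in hand, one differentiates the Vlasov equation (equivalently, differentiates the representation $f(t,z)=\mathring f(Z(0,t,z))$ and the characteristic flow) once more in $z$. The derivatives $D_z Z$ satisfy the linearized characteristic ODE whose coefficients involve $D_x^2\psi_f$ and $D_x B$; by Lemma \ref{NPOT}(b)--(c) and Schauder/Calderón--Zygmund estimates $D_x^2\psi_f(t)$ is bounded (indeed Hölder) on the support, and $D_x B$ is bounded by hypothesis, so Gronwall gives $Z(0,t,\cdot)\in C^2$ with bounds uniform in $t$, hence $f\in C([0,T];C^2_b(\RR^6))$; I would not belabor these routine but lengthy estimates. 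The one point to be careful about is that $\mathring f\in C^2_c$ (not $C^\infty_c$) is exactly enough regularity to close this second-order argument, which is why the hypothesis on $\mathring f$ in this section is $C^2_c$.
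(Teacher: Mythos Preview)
Your overall strategy---Kurth iteration for local existence, Batt's continuation criterion, and the Pfaffelmoser--Schaeffer bound on the velocity support, exploiting that $(v\times B)\cdot v=0$ so the Lorentz term drops out of $\tfrac{\d}{\d s}|V(s)|^2$---is exactly the route the paper takes. But there is a genuine gap in the step where you claim the support radius $R$ depends only on $T,\mathring f,K,\beta$.

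You assert that $\sup_{t}\|B(t)\|_\infty$ can be estimated by $K$ and $\beta$ via Lemma~\ref{LAC}(b). This is false: Lemma~\ref{LAC}(b) gives only the \emph{pointwise-in-$t$} embedding $\|B(t)\|_\infty\le k_1\|B(t)\|_{W^{2,\beta}}$, and $\|B\|_\WW=\|B\|_{L^2(0,T;W^{2,\beta})}\le 2K$ does not control $\|B\|_{L^\infty(0,T;W^{2,\beta})}$. So your argument as written yields an $R$ depending on $\|B\|_{C([0,T];C_b)}$, which is precisely what the theorem forbids. (Relatedly, your remark that the Lorentz force ``affects only the spatial positions $X(s)$'' is backwards: $v\times B$ enters $\dot V$, not $\dot X$. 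It leaves $|V|$ unchanged, but it \emph{does} contribute to the displacement $|V(s,t,x,v)-v|$, and that displacement is exactly the quantity that must be kept $\le p/4$ in the Pfaffelmoser--Schaeffer good/bad splitting.)

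The paper's fix is to use only the $L^2$-in-time control via Cauchy--Schwarz:
\[
\int_{t-\delta}^{t}\|B(\tau)\|_\infty\,\d\tau \;\le\; \sqrt{\delta}\,\Big(\int_0^T k_1^2\|B(\tau)\|_{W^{2,\beta}}^2\,\d\tau\Big)^{1/2}\;\le\; 2k_1K\sqrt{\delta}\,.
\]
This gives $|V(s,t,x,v)-v|\le \delta\,C^*P(t)^{4/3}+2k_1K\,P(t)\sqrt{\delta}\le \sqrt{\delta}\,(C^*+2k_1K)P(t)^{4/3}$. The $\sqrt{\delta}$ in place of $\delta$ forces a smaller time step, $\delta\sim p^2/P(t)^{8/3}$ rather than the classical $\delta\sim p/P(t)^{4/3}$, and the paper then re-optimizes the Schaeffer parameters (taking $p=P(t)^{4/7}$, $r=P(t)^{16/21}$) to close the recursion $P(t)\le P(t-\delta)+C\delta\,P(t)^\alpha$ with an exponent $\alpha<1$. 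This modification is the actual technical content needed to make $R$ depend only on $K,\beta,T,\mathring f$; without it your plan proves global existence but not the uniform support bound that the theorem asserts.
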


\smallskip

\begin{proof} \textit{Step 1 - Local existence and uniqueness:} For the standard Vlasov-Poisson system $(\eqref{VP2}, \eqref{IC})$ the existence and uniqueness of a local classical solution was firstly established by R. Kurth \cite{kurth}. As the field ${B\in C\big([0,T];C^1_b(\RR^3;\RR^3)\big)}$ is regular enough the existence and uniqueness of a local classical solution to our problem can be proved analogously. In this thesis we will only sketch the most important steps of that proof. The idea is to define a recursive sequence by
	\begin{align*}
	f_0(t,z) := \mathring f(z) \tand f_{k+1}(t,z) := \mathring f \big(Z_k(0,t,z) \big),\quad k\in \NN_0
	\end{align*}
	for any $t\ge 0$ and $z=(x,v)\in\RR^6$ where $Z_k$ denotes the solution of
	\begin{align*}
	\dot z = {\begin{pmatrix} \dot x \\ \dot v \end{pmatrix}} = \begin{pmatrix} v \\ -\delx\psi_{f_k}(s,x) + v\times B(s,x) \end{pmatrix} \twith Z_k(t,t,z) = z\;.
	\end{align*}
	By induction we obtain that for any $k\in\NN_0$, $Z_k$ is continuously differentiable with respect to all its variables and $f_k\in C^1([0,T]\times\RR^6)$. Moreover, according to Lemma \ref{FL2}, $f_{k+1} \in C^1([0,T]\times\RR^6)$ is the unique solution of the initial value problem
	\begin{align*}
	\delt f + v\cdot\delx f - \delx\psi_{f_k}\cdot\delv f +(v\times B)\cdot\delv f = 0, \quad f\big\vert_{t=0} = \mathring f\,.
	\end{align*}
	We intend to prove that the sequence $(f_k)$ converges to the solution of the initial value problem \eqref{VPSU} if $k$ tends to infinity. Analogously to Kurth's proof we can show that there exists $\delta>0$ and functions $Z$ and $f$ with $Z\in C([0,\delta_0]^2\times\RR^6)$, $f\in C([0,\delta_0]\times\RR^6)$ for any $\delta_0<\delta$ such that
	\begin{align*}
	Z(s,t,z) = \underset{k\to\infty}{\lim} Z_k(s,t,z) \tand f(t,z) = \mathring f\big(Z(0,t,z)\big) = \underset{k\to\infty}{\lim} f_k(t,z)
	\end{align*}
	uniformely in $s$, $t$ and $z$. For any arbitrary $\delta_0<\delta$ it turns out that $(\delx\psi_{f_k})$ and $(D_x^2\psi_{f_k})$ are Cauchy sequences in $C_b([0,\delta_0]\times\RR^3)$. This implies that $\delx\psi_f$ and $D_x^2\psi_f$ lie in $C_b([0,\delta_0]\times\RR^3)$ and consequently ${Z\in C^1([0,\delta_0]^2\times \RR^6)}$. As $\delta_0$ was arbitrary this yields ${f\in C^1([0,\delta[\times \RR^6)}$. Thus $f$ is a local solution of the initial value problem \eqref{VPSU} on the time interval $[0,\delta[$ according to Lemma \ref{FL2} as it is constant along any characteristic curve. \bpskip
	
	\textit{Step 2 - Higher regularity:} If $B \in C([0,T];C^2_b)$ it additionally follows by induction that for all $k\in \NN_0$ and $s,t\in[0,T]$, 
	\begin{gather*}
	\delx\psi_{f_k}\in C([0,T];C^2_b(\RR^3;\RR^3)), \quad Z_k(s,t,\cdot) \in C^2(\RR^6), \quad f_k \in C([0,T];C^2_b(\RR^6)).
	\end{gather*}
	Moreover, if $\delta$ is sufficiently small, one can also show that $(D_x^3 \psi_{f_k})$ is a Cauchy sequence in $C_b([0,\delta_0]\times\RR^3)$ for any $\delta_0<\delta$. Thus we can conclude that 
	\begin{gather*}
	\delx\psi_{f}\in C([0,T];C^2_b(\RR^3;\RR^3)), \quad Z(s,t,\cdot) \in C^2(\RR^6), \quad f \in C([0,T];C^2_b(\RR^6)).\pskip
	\end{gather*}
	
	\textit{Step 3 - Continuation onto the interval $[0,T]$:} Obviously Batt's continuation criterion (cf.\;J. Batt \cite{batt}) also holds true in our case. This means that we can show that the solution exists on $[0,T]$ by the following argumentation:
	We assume that $[0,T^*[$ with $T^*\le T$ is the right maximal time interval of the local solution and we will show that
	\begin{align}
	\begin{aligned}
	P(t) :&= \max \{ |v| : (x,v)\in \text{supp } f(s),0\le s\le t\}\\[2mm]
	& = \max \{ |V(s,0,x,v)| : (x,v)\in \text{supp } \mathring f,0\le s\le t\}
	\end{aligned}
	\end{align}
	is bounded on $[0,T^*[$. But then, according to Batt, the solution $f$ can be extended beyond $T^*$ which is a contradiction as $T^*$ was chosen to be maximal. Hence we can conclude that the solution exists on the whole time interval $[0,T]$. \pskip
	
	For the standard Vlasov-Poisson system (without an external field) such a bound on $P(t)$ is established in the Pfaffelmoser-Schaeffer proof \cite{pfaffelmoser,schaeffer}. We will proceed analogously and single out one particle in our distribution. Mathematically, this means to fix a characteristic $(X,V)(s)=(X,V)(s,0,x,v)$ with $(X,V)(0)=(x,v)\in\text{supp }\mathring f$. Now suppose that $0\le\delta\le t< T^*$. In the following, constants denoted by $C$ may depend only on $\mathring f$, $T$, $K$ and $\beta$. The aim in the Pfaffelmoser-Schaeffer proof is to bound the difference ${|V(t)-V(t-\delta)|}$ from above by an expression in the shape of $C\delta P(t)^\alpha$ where $\alpha < 1$ is essential. In our case an analogous approach would merely yield some bound that is ideally in the fashion of $ C\delta P(t)^\alpha + C\sqrt{\delta}P(t)$ because of the additional field term in the $\dot v$ equation of the characteristic system. However, we can use the fact that an external magnetic field does not accelerate or slow down the particles. Only the direction of velocity is influenced by $B$ but not its magnitude. This is reflected in the following computation: For $s\in [t-\delta,t]$,
	\begin{align*}
	|V(s)|^2 & = |V(t-\delta)|^2 + \int\limits_{t-\delta}^s \ddtau |V(\tau)|^2 \dtau \\[0.15cm]
	& \le P(t-\delta)^2 + 2\int\limits_{t-\delta}^s \big|\partial_x \psi_f(\tau,X(\tau))\big||V(\tau)| \dtau \;.
	\end{align*}
	The quadratic version of Gronwall's lemma (cf. Dragomir \cite[p.\,4]{dragomir}) and the definition of $\delx\psi_f$ then impliy that
	\begin{align*}
	|V(t)| 
	\le P(t-\delta) + \int\limits_{t-\delta}^t \iint \frac{f(s,y,w)}{|y-X(s)|^2}\ \mathrm dw\mathrm dy \ds \;.
	\end{align*}
	By the change of variables $y = X(s,t,x,v)$ and $w = V(s,t,x,v)$,
	\begin{align}
	\label{INT}
	|V(t)| \le P(t-\delta) + \int\limits_{t-\delta}^t \iint \frac{f(t,x,v)}{|X(s,t,x,v)-X(s)|^2}\ \mathrm dv\mathrm dx \ds
	\end{align}
	since $f$ is constant along the measure perserving characteristic flow. Then $\rho_f$ can be bounded by $\| \rho_f (t)\|_{\infty} \le C P(t)^3$ for all $ t\in[0,T^*[$. Moreover
	\begin{align*}
	\| \partial_x \psi_f(t) \|_{\infty} \le C\ \| \rho_f (t)\|_{5/3}^{5/9}\
	\|\rho_f (t)\|_{\infty}^{4/9},\quad t\in[0,T^*[
	\end{align*}
	according to G. Rein \cite[pp.\,388-390]{rein}. In the Pfaffelmoser-Schaeffer proof, one further essential result is that $\|\rho_f(t)\|_{5/3}$ is bounded uniformely in $t$ on the time interval $[0,T^*[$ which is a consequence of energy conservation. Fortunately, the energy is still conserved in our Vlasov equation that is equipped with the magnetic field $B$. Mathematically, this means that
	\begin{align*}
	{\mathcal E}(t) &= \frac 1 2 \iint |v|^2 f(t)\ \mathrm dv \mathrm dx 
	+ \frac{1}{8\pi} \int |\partial_x \psi_f(t)|^2 \ \mathrm dx 
	\end{align*}
	does not depend on $t$. By an interpolation argument we obtain that
	\begin{align*}
	\| \rho_f(t) \|_{5/3} = \left( \int \rho_f(t)^{5/3} \mathrm dx \right)^{3/5}
	\le C\left( \iint |v|^2 f(t)\ \mathrm dv \mathrm dx \right)^{3/5}
	\le C\,\mathcal E(0)^{3/5} \le C
	\end{align*}
	for all $t\in[0,T^*[$ as presented by Rein \cite[p.\,416]{rein}. Consequently,
	\begin{align}
	\| \partial_x \psi_f(t) \|_{\infty}\le C^* P(t)^{4/3},\quad t\in[0,T^*[
	\end{align}
	for some positive constant $C^*$ that depends at most on $\mathring f$. Now, for any parameter $0<p\le P(t)$, we define
	$${\delta = \delta (t):= \min \left\{1,\frac t 2, \frac{p^2}{16(C^{*}+2k_1K)^2 P(t)^{8/3}} \right\}}\,.$$
	Without loss of generality we may assume that $P(t)\ge 1$ for all $t\in[0,T^*[$ (otherwise we replace $P(t)$ by $P(t)+1$). Thus
	\begin{align*}
	&\left|V(s,t,x,v)-v\right|  \;=\; \int\limits_s^t |\dot V(\tau,t,x,v)| \ \mathrm d\tau 
	\;\le\; \int\limits_s^t \|\delx\psi_f(\tau)\|_\infty + P(t)\|B(\tau)\|_\infty \dtau \\[2mm]
	& \quad \;\le\; \delta C^* P(t)^{4/3} + \sqrt{\delta}\;2k_1KP(t) \;\le\; \sqrt{\delta}\;(C^*+2k_1K) P(t)^{4/3} \;\le\; \frac p 4
	\end{align*}
	for all $s\in[t-\delta,t]$ and $(x,v)\in \RR^6$. Now, for any $r>0$, the Pfaffelmoser-Schaeffer proof yields the following bound:
	\begin{align*}
	\iint\frac{f(t,x,v)}{|X(s,t,x,v)-X(s)|^2} \dv \mathrm dx &\le C\delta \left( p^{4/3} + \delta r \ln\left( \frac{4P(t)}{p} \right) + \frac 1 {r\delta} \right).
	\end{align*}
	For a detailed derivation of this inequality confer Rein \cite[pp.\,418-422]{rein}. Thus inequality \eqref{INT} implies that
	\begin{align*}
	|V(t)| &\le P(t-\delta) + C \delta \left( p^{4/3} + r\;\ln\left( \frac{4P(t)}{p} \right) + \frac{1}{r\delta} \right)\;.
	\end{align*}
	We will now choose $p = P(t)^{4/7}$ and $r = P(t)^{16/21}$ in order to enforce that the terms of the sum on the right-hand side of this estimate are of the same order in $P(t)$. Then\vspace{-1mm}
	$${\delta(t) = \min \left\{1,\frac t 2, \frac{1}{16(C^{*}+2k_1K)^2} P(t)^{-32/21} \right\}}\;.$$
	Moreover, suppose that
	\begin{align*}
	P(t) \ge 1 + \big[ 16( C^* + 2k_1K )^2 \big]^{-{21}/{32}},\quad t\in[0,T^*[
	\end{align*}
	(otherwise we replace $P(t)$ by $P(t)+1 + [16( C^* + 2k_1K )^2 ]^{-{21}/{32}}\;$). This yields
	$${\delta(t) = \min \left\{\frac t 2, \frac{1}{16(C^{*}+2k_1K)^2} P(t)^{-32/21} \right\}}\;.$$
	\newpage\noindent
	If for all $t\in[0,T^*[$,\vspace{-1mm}
	\begin{align*}
	\frac{1}{16(C^{*}+2k_1K)^2} P(t)^{-32/21} > \frac t 2
	\end{align*}
	then immediately $P(t)\le C'$ on $[0,T^*[$ for some constant $C'>0$ depending only on $\mathring f$, $K$ and $\beta$. Else there exists
	\begin{align*}
	T':=\inf\left\{ t\in [0,T^*[\,\cap\, [0,T] \Bigvert \frac{1}{16(C^{*}+2k_1K)^2} P(t)^{-32/21} \le \frac t 2 \right\}\;.
	\end{align*}
	Since $P(t)$ is monotonically increasing,
	\begin{align*}
	\delta(t) = \frac{1}{16(C^{*}+2k_1K)^2} P(t)^{-32/21},\quad t\ge T'\;.
	\end{align*}
	Hence $\delta$ is decreasing on $[T',T^*[$. For $t\in[T',T^*[$,
	\begin{align*}
	& |V(t)| \le P(t-\delta(t)) + C \delta(t) \left( p^{4/3} + r\;\ln\left( \frac{4P(t)}{p} \right) + \frac{1}{r} P(t)^{32/21} \right)\\
	& \;\le P(t-\delta(t)) + C\delta(t)\;P(t)^{16/21}\ln(P(t)) \;\le\; P(t-\delta(t)) + C\delta(t)\; P(t)^{17/21}.
	\end{align*}
	Let now $t_0\in ]T',T^*[$ be arbitrary. We define $t_{i+1}:=t_i - {\delta(t_i)}$ as long as $t_i\ge T'$. Since $ t_i - t_{i+1} = {\delta (t_i)} \ge {\delta (t_0)} $, there exists $k\in \mathbb N$ such that
	\begin{equation*}
	t_{k+1}\le T'< t_k < \cdot\cdot\cdot <t_0.
	\end{equation*}
	Without loss of generality, $t_{k+1} = T'$ (otherwise we shrink $\delta(t_k)$ appropriately). Then for ${i\in\{1,...,k\}}$ and $t\in [t_{i+1},t_i]$,
	\begin{align*}
	|V(t)| \le P(t_{i+1}) + C\delta(t_{i+1}) P(t_0)^{17/21}\;.
	\end{align*}
	Additionally, $|V(t)|\le P(t_{i+1})$ if $t<t_{i+1}$ for any ${i\in\{1,...,k\}}$. Thus
	\begin{align*}
	P(t_i) - P(t_{i+1}) \le  C\delta(t_{i+1}) P(t_0)^{19/21}\;.
	\end{align*}
	Consequently\vspace{-2mm}
	\begin{align*}
	P(t_0) -\hspace{-1pt} P(T') &= \sum_{i=0}^k \big( P(t_i) -\hspace{-1pt} P(t_{i+1}) \big) \\
	&\le CP(t_0)^{19/21} \sum_{i=0}^k \delta(t_{i+1}) \;\le\; C t_0 P(t_0)^{19/21}.
	\end{align*}
	Since $t_0\in [T',T^*[$ was arbitrary, this means that for all $t\in [T',T^*[$,
	\begin {align*}
	P(t) \le P(T') + CtP(t)^{19/21} \le \big( P(T')^{2/21} + Ct \big) P(t)^{19/21} \le C''( 1 + t ) P(t)^{19/21}
\end{align*}
where $C''$ depends only on $T'$, $\mathring f$, $K$ and $\beta$. This finally yields
\begin{align*}
P(t) \le \max\{C',C''\} (1+t)^{21/2},\quad t\in [0,T^*[\;.
\end{align*}
Now according to Batt's continuation criterion the solution can be extended beyond $T^*$ which is a contradiction since $T^*$ was chosen as large as possible. This implies that the solution exists on the whole time interval $[0,T]$. Then 
\begin{align*}
T'=\inf\left\{ t\in [0,T] \Bigvert \frac{1}{16(C^{*}+2k_1K)^2} P(t)^{-32/21} \le \frac t 2 \right\}\;.
\end{align*}
depends only on $\mathring f$, $T$, $K$ and $\beta$ and thus
\begin {align*}
P(t) \le C(1+T)^{21/2} =:C_P,\quad t\in[0,T]
\end{align*}
where $C_P>0$ depends only on $\mathring f$, $T$, $K$ and $\beta$. We will now consider
\begin{align*}
Q(t) :&= \max\{|x|:(x,v)\in \supp f(s), 0\le s\le t\} 
,\quad t\in [0,T].
\end{align*}
Obviously $Q(t) \le Q(0) + TC_P =: C_Q$ for all $t\in[0,T]$ where $C_Q>0$ depends only on $T$ and $\mathring f$. Finally we define
\begin{align*}
S(t) := \max\{|(x,v)|:(x,v)\in \text{supp } f(s), 0\le s\le t\},\quad t\in [0,T]
\end{align*}
and obtain $S(t) \le P(t) + Q(t) < C_P + C_Q+1 =:R$ for all $t\in [0,T]$ which means that $\text{supp }f(t) \subset B_R^6(0)$ for all $t\in[0,T]$. 
\end{proof}

Temporarily we will write $f_B$ to denote the classical solution that is induced by the field~$B$. In order to prove that any field $B\in\BB$ still induces a strong solution of the initial value problem the following two lemmata are essential. For fields $B\in\MM$ Lemma \ref{LIP} asserts that $f_B$ depends Lipschitz continuously on $B$ while its derivatives $\delz f_B$ and $\delt f_B$ are Hölder continuous with respect to $B$. In the course of the construction of a strong solution to some field $B\in\BB$ we will approximate $B$ by a sequence $(B_k)\subset\MM$ and then Lemma \ref{LIP} will ensure that $(f_{B_k})$, $(\delt f_{B_k})$ and $(\delz f_{B_k})$ are Cauchy sequences in some sense. To prove Lemma \ref{LIP} we will need some uniform bounds that are established in Lemma \ref{ES1}.

%
%

\begin{lem}
\label{ES1}
Let $B\in \MM$ be any smooth field. For $t,s\in[0,T]$, $z=(x,v)\in\RR^6$, let $Z_B=Z_B(s,t,z)$ $=(X_B,V_B)(s,t,x,v)$ be the solution of the characteristic system with $Z_B(t,t,z)=z$. Furthermore let $f_B$ be the classical solution of the initial value problem \eqref{VPSU} to the field $B$. Then, there exist constants $R_Z\ge R$, $c_1,c_2,c_3,c_4>0$ depending only on $\mathring f$, $T$, $\K$, and $\beta$ such that for all $t,s\in [0,T]$,\\[-0.3cm]
\begin{gather*}
\|Z_B(s,t,\cdot)\|_{L^\infty(B^6_R(0))} \le R_Z\,,\quad
\|D_z Z_B(s,t,\cdot)\|_{L^\infty(B^6_R(0))}\le c_1\,,\\
\|\delz f_B(t)\|_\infty \le c_2\,,\quad
\|D_x^2 \psi_{f_B}(t)\|_\infty \le c_3\,,\quad
\|\delt f_B\|_{L^2(0,T;C_b)}\le c_4 \,.
\end{gather*}
$Z_B$ and $f_B$ are twice continuously differentiable with respect to $z$ and there exist constants $c_5,c_6>0$ depending only on $\mathring f$, $T$, $\K$, and $\beta$ such that for all $s\in[0,T]$,
\begin{gather*}
\big\|t\mapsto Z_B(s,t,\cdot)\big\|_{L^\infty(0,T;W^{2,\beta}(\BR))}\le c_5, \quad \|D_z^2 f_B\|_{L^\infty(0,T;L^\beta)} \le c_6\;.
\end{gather*}
\end{lem}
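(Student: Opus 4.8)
The plan is to read off every bound from the representation $f_B(t,z)=\mathring f(Z_B(0,t,z))$ together with Gronwall estimates for the variational equations of the characteristic flow, making sure that each constant produced depends only on $\mathring f$, $T$, $\K$ and $\beta$. The key device for the uniformity in $B$ is that all time integrals involving the field are controlled through $\int_0^T\|B(s)\|_{C^1_b}\ds\le k_1\sqrt T\,\|B\|_\WW\le 2k_1K\sqrt T$ (Lemma~\ref{LAC}(b) and Cauchy--Schwarz), not through an $L^\infty$-in-time bound on $B$, which is unavailable. As a preliminary step: by Theorem~\ref{GCS} the solution $f_B$ is classical with $\supp f_B(t)\subset B_R^6(0)$ for $R$ as there, and by Lemma~\ref{FL2} one has $\|f_B(t)\|_\infty=\|\mathring f\|_\infty$, hence $\|\rho_{f_B}(t)\|_\infty\le C$ and, by Lemma~\ref{NPOT}(c), $\|\delx\psi_{f_B}(t)\|_\infty\le C_0$ with $C,C_0$ depending only on the data. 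For $z\in B_R^6(0)$ the identity $\ddtau|V_B(\tau,t,z)|^2=-2V_B\cdot\delx\psi_{f_B}(\tau,X_B)$ (the Lorentz term drops, exactly as in the proof of Theorem~\ref{GCS}) and the quadratic Gronwall lemma give $|V_B(s,t,z)|\le R+C_0T$ and then $|X_B(s,t,z)|\le R+T(R+C_0T)$, which yields $R_Z$.

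The next step is the first-order bootstrap. Differentiating the characteristic system in $z$ gives the linear equation $\dds D_zZ_B=\mathcal M(s)\,D_zZ_B$ with $|\mathcal M(s)|\le 1+\|D_x^2\psi_{f_B}(s)\|_\infty+(1+R_Z)\|B(s)\|_{C^1_b}$. Since $\delz f_B(t,z)=D\mathring f(Z_B(0,t,z))\,D_zZ_B(0,t,z)$, setting $\zeta(t):=\|D_zZ_B(0,t,\cdot)\|_{L^\infty(B_R^6(0))}$ gives $\|\delz f_B(t)\|_\infty\le\|D\mathring f\|_\infty\,\zeta(t)$ and hence $\|\nabla_x\rho_{f_B}(t)\|_\infty\le C\zeta(t)$. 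Splitting the (principal-value) second-derivative Newtonian kernel into a near part $|x-y|<d$ (integrate one derivative by parts onto $\rho_{f_B}$) and a far part $d<|x-y|<R$, then optimising in $d$, yields the classical logarithmic estimate $\|D_x^2\psi_{f_B}(t)\|_\infty\le C(1+\ln^+\|\nabla_x\rho_{f_B}(t)\|_\infty)\le C(1+\ln^+\zeta(t))$. Inserting this into the Gronwall bound for $\zeta$ produces the Osgood-type inequality $\zeta(t)\le C\exp\!\big(C\!\int_0^t(1+\ln^+\zeta(\tau))\dtau\big)$; taking logarithms and applying the ordinary Gronwall lemma to $1+\ln^+\zeta$ shows $\zeta$ is bounded by a constant depending only on the data. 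This gives $c_2$, then $c_3$ via the logarithmic estimate, then $c_1$ by running the linear equation for $D_zZ_B$ for arbitrary $s,t$ with $\|D_x^2\psi_{f_B}\|_\infty\le c_3$.

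The remaining bounds follow more mechanically. From the Vlasov equation, $|\delt f_B(t,z)|\le|v|\,|\delx f_B|+\|\delx\psi_{f_B}(t)\|_\infty\,|\delv f_B|+|v|\,|B(t)|\,|\delv f_B|$, and since all terms vanish off $B_R^6(0)$ where $|v|\le R$, one gets $\|\delt f_B(t)\|_{C_b}\le(R+C_0+R\|B(t)\|_\infty)c_2$; integrating in time with $\big\|\,\|B(\cdot)\|_\infty\big\|_{L^2(0,T)}\le 2k_1K$ yields $c_4$. Since $B\in\MM\subset C([0,T];C^2_b)$, Theorem~\ref{GCS} gives $f_B\in C([0,T];C^2_b)$ and $\delx\psi_{f_B}\in C([0,T];C^2_b)$, so the characteristic field is $C^2$ in $z$; hence $Z_B(s,t,\cdot)\in C^2$ and $f_B(t,\cdot)=\mathring f\circ Z_B(0,t,\cdot)\in C^2$. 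Differentiating the variational equation once more gives $\dds D_z^2Z_B=\mathcal M(s)D_z^2Z_B+\mathcal N(s)(D_zZ_B,D_zZ_B)$, where $\mathcal N(s)$ involves $D_x^3\psi_{f_B}(s,X_B)$ and $D_x^2B(s,X_B)$ along the flow. By the Calderón--Zygmund inequality and compact support, $\|D_x^3\psi_{f_B}(s)\|_{L^\beta}\le C\|\nabla_x\rho_{f_B}(s)\|_{L^\beta}\le C\|\nabla_x f_B(s)\|_{L^\beta}\le Cc_2$ uniformly in $s$; and since $Z_B(s,t,\cdot)$ is measure preserving and maps $B_R^6(0)$ into $B_{R_Z}^6(0)$, the $L^\beta(B_R^6(0),\dz)$-norms of $D_x^3\psi_{f_B}(s,X_B(s,t,\cdot))$ and $D_x^2B(s,X_B(s,t,\cdot))$ are bounded by $C\|D_x^3\psi_{f_B}(s)\|_{L^\beta}$ and $C\|B(s)\|_{W^{2,\beta}}$ respectively. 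A Gronwall estimate in $L^\beta(B_R^6(0))$ — with $\|\mathcal M\|_{L^1(0,T)}$ and the inhomogeneous term in $L^1(0,T)$ thanks to $c_3$, $c_1$ and $\|B\|_\WW\le 2K$ — bounds $\|D_z^2Z_B(s,t,\cdot)\|_{L^\beta(B_R^6(0))}$ uniformly, and adding the $L^\infty$-bounds $R_Z$ and $c_1$ gives $c_5$. Finally, differentiating $f_B(t,z)=\mathring f(Z_B(0,t,z))$ twice and using that $D_z^2f_B(t,\cdot)$ is supported in $B_R^6(0)$ gives $\|D_z^2f_B(t)\|_{L^\beta}\le\|D^2\mathring f\|_\infty c_1^2|B_R^6(0)|^{1/\beta}+\|D\mathring f\|_\infty\|D_z^2Z_B(0,t,\cdot)\|_{L^\beta(B_R^6(0))}=:c_6$, uniformly in $t$.

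The main obstacle is the first-order bootstrap: $\|D_x^2\psi_{f_B}\|_\infty$ is \emph{not} controlled by $\|f_B\|_\infty$ alone — the second derivative of the Newtonian potential of a merely bounded density can be unbounded — so the estimate for $D_zZ_B$ (equivalently for $\delz f_B$) cannot be closed by an ordinary Gronwall inequality and one is forced into the logarithmic estimate and the Osgood-type argument above (the same mechanism behind the particular exponents of $P(t)$ in Theorem~\ref{GCS}). Everything else is a lengthy but routine sequence of differentiations of the flow, held together by the change-of-variables identity that transports the $L^\beta$-bounds on $D_x^3\psi_{f_B}$ and $D_x^2B$ along the measure-preserving map $Z_B$, and by the uniform $L^1$-in-time control of the field terms.
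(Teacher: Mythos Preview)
Your proof is correct and follows essentially the same route as the paper: the $R_Z$ bound via the energy-type identity for $|V_B|^2$ (the Lorentz term dropping), the first-order bootstrap via the logarithmic estimate for $D_x^2\psi_{f_B}$ coupled to the Gronwall bound on $D_zZ_B$ (the paper phrases the Osgood step as a linear Gronwall for $\|D_x^2\psi_{f_B}(t)\|_\infty$ directly, which is the same argument after substitution), $c_4$ from the Vlasov equation, the $C^2$ regularity from Theorem~\ref{GCS}, and the $L^\beta$ Gronwall for $D_z^2Z_B$ using the Calder\'on--Zygmund bound on $D_x^3\psi_{f_B}$ together with the measure-preserving change of variables. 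The only cosmetic difference is that for $c_5$ the paper works with the $\beta$-th power $\int_{B_R}|\partial_{z_i}\partial_{z_j}Z_B|^\beta\dz$ and invokes a nonlinear Gronwall inequality with exponent $(\beta-1)/\beta$, whereas you run an equivalent linear Gronwall on the $L^\beta$-norm itself.
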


\begin{proof}
\label{PES1}
Let $s,t\in [0,T]$ and ${z\in \BR}$ be arbitrary (without loss of generality $s\le t$) and let $i,j\in {1,...,6}$ be arbitrary indices. Let $B\in\MM$ be an arbitrary field and let $Z_B\colon [0,T] \times [0,T] \times \RR^6 \to \RR^6$ denote the induced solution of the characteristic system satisfying the initial condition $Z_B(t,t,z) = z$.
For brevity, we will use the notation $Z_B(s)=Z_B(s,t,z)$. The letter $C$ will denote a positive generic constant depending only on $\mathring f$, $K$, $T$ and $\beta$. It holds that
\begin{align*}
|Z_B(s)|^2 &\le |z|^2 + \hspace{-0.5mm}\int\limits_s^t \ddtau |Z_B(\tau)|^2\ \mathrm d\tau \\
& \le R^2 + \int\limits_s^T |Z_B(\tau)|^2\ \mathrm d\tau + \int\limits_s^T |Z_B(\tau)|\|\partial_x \psi_{f_B}(\tau) \|_\infty\dtau.
\end{align*}
Hence applying first the standard version and then the quadratic version of Gronwall's lemma provides that
\begin{align}
\|Z_B(s)\|_{L^\infty(\BR)} \le C + C \int\limits_s^T \|\partial_x \psi_{f_B}(\tau) \|_\infty\dtau \le C  =: R_Z.
\end{align}
The partial derivatives $\delzi Z_B$, $i=1,...,6$ can be bounded by
\begin{align*}
|\partial_{z_i} Z_B(s)| \le 1 + \int\limits_s^t C \big( 1 + \|D_x^2 \psi_{f_B}(\tau)\|_\infty + \|B(\tau)\|_{W^{1,\infty}} \big) |\partial_{z_i} Z_B(\tau)|\;\mathrm d\tau
\end{align*}
and consequently, by Gronwall's lemma,
\begin{align}
\label{ADZ}
\|D_z Z_B(s)\|_{L^\infty(\BR)} 
\le C \exp\left( C \int\limits_s^t \|D_x^2 \psi_{f_B}(\tau)\|_\infty \;\mathrm d\tau \right)
\end{align}
According to G. Rein in \cite[p.\,389]{rein},
\begin{align}
\label{AD2PSI}
\|D_x^2 \psi_{f_B}(t)\|_\infty &\le C\left[ (1+\|\rho_{f_B}(t)\|_\infty)(1+ \ln_+\|\partial_x \rho_{f_B}(t)\|_\infty) + \|\rho_{f_B}(t)\|_{L^1} \right] \nonumber \\
&\le C+C\ln_+\|\partial_x \rho_{f_B}(t)\|_\infty
\end{align}
as $\|\rho_{f_B}(t)\|_\infty \le \tfrac 4 3 R^3 \pi \|\mathring f\|_\infty$ and $\|\rho_{f_B}(t)\|_{L^1} = \|f_B(t)\|_{L^1} = \|\mathring f\|_{L^1}$. Moreover,
\begin{align*}
&|\partial_x \rho_{f_B}(t,x)|  \le \int\limits_{|v|\le R} |\partial_x f(t,x,v)|\;\mathrm dv
\le \frac{4\pi}{3}R^3 \; \|\partial_z \mathring f\|_\infty \|\partial_z Z_B(0)\|_{L^\infty(\BR)} \\
&\quad \le \frac{4\pi}{3}R^3\; \|\partial_z \mathring f\|_\infty\; C \exp\left( C \int\limits_0^t \|\partial_x^2 \psi_{f_B}(\tau)\|_\infty\;\mathrm d\tau \right)
\end{align*}
and now, by (\ref{AD2PSI}) and Gronwall's lemma, $\|D_x^2 \psi_{f_B}(t)\|_\infty \le C =: c_3$ for all ${t\in [0,T]}$. Thus, due to \eqref{ADZ}, $\|D_z Z_B(s) \|_{L^\infty(\BR)} \le C =: c_1$ for all ${t\in [0,T]}$. 
This directly yields
\begin{align}
\label{DELZF}
\| \partial_z f_B(t) \|_\infty \le \| \partial_z \mathring f \|_\infty  \|D_z Z_B(0) \|_{L^\infty(\BR)} \le C =: c_2\;.
\end{align}
and we can finally conclude that $\| \delt f_B \|_{L^2(0,T;C_b)} \le C =: c_4$ by expressing $\delt f$ by the Vlasov equation. In Step 2 of the proof of Theorem \ref{GCS} we have already showed that for all $s,t\in [0,T]$,
\begin{gather*}
\delx\psi_{f_B}\in C([0,T];C^2_b(\RR^3;\RR^3)), \quad Z_B(s,t,\cdot) \in C^2(\RR^6), \quad f_B \in C([0,T];C^2_b(\RR^6)).
\end{gather*}
Let $i,j,k\in \{1,...,6\}$ be arbitrary. Recall that, according to Lemma \ref{NPOT},
\begin{align}
\label{ESTPSI}
\begin{aligned}
\| \delxi \delxj \partial_{x_k} \psi_{f_B} (t) \|_{L^\beta} &= \| \delxi \delxj \psi_{\partial_{x_k} f_B} (t) \|_{L^\beta} \le C\, \| \partial_{x_k} f_B (t) \|_{L^\beta}\\
& \le C\,\| \partial_{x_k} f_B (t) \|_{L^\infty} \le C.
\end{aligned}
\end{align}
Now, for all $s,t\in[0,T]$ (without loss of generality $s\le t$),
\begin{align*}
&\int\limits_\BR |\delzi\delzj Z_B(s,t,z)|^\beta \dz = \int\limits_s^t \ddtau \int\limits_\BR |\delzi\delzj Z_B(\tau,t,z)|^\beta \dz \dtau \\
&\quad = \beta \int\limits_s^t \int\limits_\BR |\delzi\delzj Z(\tau,t,z)|^{\beta-1}\, |\delzi\delzj \dot Z_B(\tau,t,z)| \dz \dtau.
\end{align*}
Note that for all $s,t\in[0,T]$ and $z\in\BR$,
\begin{align*}
|\delzi\delzj \dot Z_B(\tau,t,z)| 
&\le C\, |\delzi\delzj Z_B(\tau)| \Big( 1 + \|D_x^2 \psi_{f_B}(\tau)\|_{L^\infty} + \|D_x B(\tau)\|_{L^{\infty}} \Big) \\
&\quad + C\, \Big( |D_x^3 \psi_{f_B}(\tau,X_B(\tau))| + |D_x^2 B(\tau,X_B(\tau))| \Big)\,.
\end{align*}
Thus, applying Hölder's inequality with exponents $p=\frac \beta{\beta-1}$ and $q=\beta$ gives
\begin{align*}
&\int\limits_\BR |\delzi\delzj Z_B(s,t,z)|^\beta \dz \\
&\;\;\le C\int\limits_s^t \left(\; \int\limits_\BR |\delzi\delzj Z_B(\tau)|^{\beta} \dz \right) \Big( 1 + \|D_x^2 \psi_{f_B}(\tau)\|_{L^\infty} + \|B(\tau)\|_{W^{2,\beta}} \Big) \dtau \\
&\quad + C\int\limits_s^t \left(\;\int\limits_\BR |\delzi\delzj Z_B(\tau)|^{\beta} \dz \right)^{\frac{\beta-1}{\beta}} \Big( \|D_x^3 \psi_{f_B}(\tau)\|_{L^{\beta}} + \|B(\tau)\|_{W^{2,\beta}} \Big) \dtau
\end{align*}
Now we can use \eqref{ESTPSI} and a nonlinear generalization of Gronwall's lemma (cf. \cite[p.\,11]{dragomir}) with exponent $\frac{\beta-1}{\beta}\in ]0,1[ $ to obtain that $\|\delzi\delzj Z_k(s,t,\cdot)\|_{L^\beta} \le C$. This finally implies that
\begin{align*}
\big\|t\mapsto Z_B(s,t,\cdot)\big\|_{L^\infty(0,T;W^{2,\beta}(\BR))} \le C=:c_5, \quad s\in[0,T]
\end{align*}
Finally, by chain rule,
\begin{align*}
&\|\delzi \delzj f\|_{L^\infty(0,T;L^\beta)} \le \|\mathring f\|_{C^2_b}\; \big\| t\mapsto Z(0,t,\cdot) \big\|_{L^\infty(0,T;W^{2,\beta}(\BR))} 
\le C=:c_6\;.
\end{align*}
The proof is complete.
\end{proof}

%
%

\begin{lem}
\label{LIP}
Let $B,H\in \MM$ and let $f_B,f_H$ be the induced classical solutions. Moreover, let $Z_B$ denote the solution of the characteristic system to the field $B$ satisfying $Z_B(t,t,z)=z$ and let $Z_H$ be defined analogously. Then, there exist constants $\ell_1,\ell_2,L_1,L_2,L_3>0$ depending only on $\mathring f$, $T$, $\K$ and $\beta$ such that
\begin{align*}
\|Z_B-Z_H\|_{C([0,T];C_b(\BR))} &\;\le\; \ell_1 \|B-H\|_\WW\;,\\[1mm]
\|\partial_z Z_B-\partial_z Z_H\|_{C([0,T];C_b(\BR))} &\;\le\; \ell_2 \|B-H\|_\WW^{\gamma}\;,\\[1mm]
\|f_B-f_H\|_{C([0,T];C_b)} &\;\le\; L_1 \|B-H\|_\WW\;,\\[1mm]
\|\partial_z f_B-\partial_z f_H\|_{C([0,T];C_b)} &\;\le\; L_2 \|B-H\|_\WW^\gamma\;,\\[1mm]
\|\partial_t f_B-\partial_t f_H\|_{L^2(0,T;C_b)} &\;\le\; L_3 \|B-H\|_\WW^\gamma\;
\end{align*}
where $\gamma=\gamma(\beta)$ is the H\"older exponent from Lemma \ref{LAC}.
\end{lem}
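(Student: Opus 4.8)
The plan is to run a cascade of Gronwall estimates along the characteristic flow, using the uniform a~priori bounds of Lemma~\ref{ES1} ($R_Z$, $c_1,\dots,c_6$), the potential estimates of Lemma~\ref{NPOT}, the embeddings $W^{j,\beta}\hookrightarrow C^{j-1,\gamma}$ of Lemma~\ref{LAC}\,(b), the representation $f_B(t,\cdot)=\mathring f\circ Z_B(0,t,\cdot)$ together with the measure-preserving $C^1$-diffeomorphism property of the flow (Lemmata~\ref{CHS}--\ref{FL2}), and the uniform support bound $\supp f_B(t),\supp f_H(t)\subset B_R^6(0)$ of Theorem~\ref{GCS} --- so that $|v|<R$ on all relevant sets and every force term is evaluated inside $B_{R_Z}^6(0)$, hence uniformly bounded. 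The five estimates will be proved in the order in which they are stated, each relying on the previous ones; $C$ denotes a generic constant depending only on $\mathring f,T,K,\beta$, and since $\|B\|_\WW,\|H\|_\WW\le 2K$ (whence $\|B-H\|_\WW\le 4K$) one may freely replace $\|B-H\|_\WW$ by $C\|B-H\|_\WW^\gamma$. The recurring quantitative ingredient is that, by Cauchy--Schwarz in time and Lemma~\ref{LAC}\,(b),
\[
\int_0^T\|B(\tau)-H(\tau)\|_{C^{j-1,\gamma}}\dtau\;\le\;C\,\|B-H\|_\WW\tand\Big(\int_0^T\|B(\tau)-H(\tau)\|_{C^{j-1,\gamma}}^2\dtau\Big)^{1/2}\le C\,\|B-H\|_\WW
\]
for $j\in\{1,2\}$.

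\textit{The flow and $f$ (Lipschitz).} Write the characteristic ODEs for $Z_B(\cdot,0,z)$ and $Z_H(\cdot,0,z)$, $z\in B_R^6(0)$, in integral form and subtract. In the $\dot v$-difference, split the electric force into $\delx\psi_{f_B}(\tau,X_B)-\delx\psi_{f_B}(\tau,X_H)$ plus $\delx\psi_{f_B}(\tau,X_H)-\delx\psi_{f_H}(\tau,X_H)$, the first bounded by $c_3|X_B-X_H|$ (Lemma~\ref{ES1}), the second by $\|\delx\psi_{f_B-f_H}(\tau)\|_\infty\le C\|f_B(\tau)-f_H(\tau)\|_\infty$ (Lemma~\ref{NPOT}\,(c) and linearity of $\varphi\mapsto\delx\psi_\varphi$); and split the magnetic force into $(V_B-V_H)\times B(\tau,X_B)+V_H\times(B(\tau,X_B)-B(\tau,X_H))+V_H\times(B(\tau,X_H)-H(\tau,X_H))$, bounded by $C\|B(\tau)\|_{C^1}(|V_B-V_H|+|X_B-X_H|)+R_Z\|B(\tau)-H(\tau)\|_\infty$. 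The crucial point is that $\|f_B(\tau)-f_H(\tau)\|_\infty\le\|\delz\mathring f\|_\infty\sup_{z'}|Z_B(0,\tau,z')-Z_H(0,\tau,z')|$, and since $Z_H(0,\tau,\cdot)$ is $c_1$-Lipschitz and inverts $Z_H(\tau,0,\cdot)$ (Lemma~\ref{CHS}\,(b)), this is $\le c_1\|\delz\mathring f\|_\infty\,u(\tau)$ with $u(\tau):=\|Z_B(\tau,0,\cdot)-Z_H(\tau,0,\cdot)\|_{L^\infty(B_R^6(0))}$. Hence $u$ satisfies $u(s)\le\int_0^s C(1+\|B(\tau)\|_{C^1})\,u(\tau)\dtau+C\|B-H\|_\WW$ with an $L^1$-in-time coefficient, so Gronwall yields $u\le C\|B-H\|_\WW$; in particular $\|f_B(\tau)-f_H(\tau)\|_\infty\le C\|B-H\|_\WW$ for every $\tau$. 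Feeding this back, the analogous Gronwall inequality for $s\mapsto\sup_z|Z_B(s,t,z)-Z_H(s,t,z)|$ now closes for each fixed $t$, which gives the first estimate (constant $\ell_1$); and $\|f_B(t)-f_H(t)\|_\infty\le\|\delz\mathring f\|_\infty\|Z_B(0,t,\cdot)-Z_H(0,t,\cdot)\|_\infty\le L_1\|B-H\|_\WW$ gives the third.

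\textit{The derivatives.} Differentiating the characteristic ODE in $z$ gives a linear variational equation $\dds\,\delz Z_B=A_B\,\delz Z_B$, $\delz Z_B(t,t,z)=I$, whose coefficient matrix $A_B(s)$ is built from $D_x^2\psi_{f_B}(s,X_B(s))$, $B(s,X_B(s))$, $D_xB(s,X_B(s))$, $V_B(s)$, so $\|A_B(s)\|\le C(1+\|B(s)\|_{C^1})\in L^1(0,T)$ and $\|\delz Z_H\|\le c_1$. Subtracting the $H$-equation reduces the second estimate, via Gronwall, to bounding $\int_0^T\|A_B(\tau)-A_H(\tau)\|\dtau$. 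Here the differences of $D_x^2\psi$ and of $D_xB$ evaluated at the two (now different) points $X_B(\tau),X_H(\tau)$ are estimated by their $C^{0,\gamma}$-moduli of continuity in $x$ --- which are $\le C$ resp.\ $\le C\|B(\tau)\|_{W^{2,\beta}}$ by Lemma~\ref{NPOT}\,(c) (using $\delxi\delxj\psi_{f_B}=\delxi\psi_{\delxj f_B}$ and $\|\delz f_B\|_\infty\le c_2$) and Lemma~\ref{LAC}\,(b) --- times $|X_B(\tau)-X_H(\tau)|^\gamma\le(C\|B-H\|_\WW)^\gamma$ from the previous step; this is exactly where Lipschitz continuity degrades to the H\"older exponent $\gamma$, since $D_x^2\psi_f$ and $D_xB$ are only $C^{0,\gamma}$ in $x$. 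The remaining frozen-point differences are controlled by $\|D_x^2\psi_{f_B-f_H}(\tau)\|_\infty$ --- which by Schauder estimates for the Newtonian potential (Gilbarg--Trudinger \cite{gilbarg-trudinger}) and interpolation between $\|\rho_{f_B-f_H}(\tau)\|_\infty\le C\|B-H\|_\WW$ and $\|\rho_{f_B-f_H}(\tau)\|_{C^1}\le C$ is $\le C\|\rho_{f_B-f_H}(\tau)\|_{C^{0,1-\gamma}}\le C\|B-H\|_\WW^\gamma$ --- and by $\|B(\tau)-H(\tau)\|_{C^{1,\gamma}}$-type quantities that integrate to $\le C\|B-H\|_\WW$ by the recurring ingredient. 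Altogether $\int_0^T\|A_B-A_H\|\dtau\le C\|B-H\|_\WW^\gamma$, giving the second estimate (constant $\ell_2$). The fourth estimate then follows from $\delz f_B(t,z)=D\mathring f(Z_B(0,t,z))\,D_zZ_B(0,t,z)$: split off the factor difference $D\mathring f(Z_B(0,t,\cdot))-D\mathring f(Z_H(0,t,\cdot))$ (which is $\le\|D^2\mathring f\|_\infty\ell_1\|B-H\|_\WW$, using $\mathring f\in C^2_c$) and the factor difference $D_zZ_B(0,t,\cdot)-D_zZ_H(0,t,\cdot)$ (which is $\le\ell_2\|B-H\|_\WW^\gamma$ by the second estimate), both against uniformly bounded factors. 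Finally, writing $\delt f_B=-v\cdot\delx f_B+\delx\psi_{f_B}\cdot\delv f_B-(v\times B)\cdot\delv f_B$, taking the difference with the $H$-version, using $|v|<R$ on the common support, and splitting each product into a $B$-minus-$H$ factor times a uniformly bounded factor (with $\|\delx\psi_{f_B-f_H}(t)\|_\infty\le C\|f_B(t)-f_H(t)\|_\infty$, $\|\delx\psi_{f_H}(t)\|_\infty\le C$, $\|\delz f_B(t)\|_\infty,\|\delz f_H(t)\|_\infty\le c_2$, $\|\delz f_B(t)-\delz f_H(t)\|_\infty\le L_2\|B-H\|_\WW^\gamma$) bounds $\|\delt f_B(t)-\delt f_H(t)\|_\infty$ by $C\|B-H\|_\WW^\gamma+C\|B(t)-H(t)\|_\infty+C\|H(t)\|_\infty\|B-H\|_\WW^\gamma$; taking the $L^2(0,T)$-norm in $t$ and using $\|B(t)-H(t)\|_\infty\le C\|B(t)-H(t)\|_{W^{1,\beta}}$, $\|H(t)\|_\infty\le C\|H(t)\|_{W^{1,\beta}}$ with $\|H\|_\WW\le 2K$ gives the fifth estimate (constant $L_3$).

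The main obstacle is the first step: the density difference $\|f_B(\tau)-f_H(\tau)\|_\infty$ enters the electric-force term and so couples back into the estimate for the flow, and the resulting Gronwall loop does not close on its own. The resolution --- controlling the density difference by the flow difference \emph{at the same time} $\tau$, by inverting the flow and using its uniform Lipschitz bound --- is exactly the mechanism behind continuous dependence on the initial datum for the Vlasov--Poisson system (cf.\ \cite{rein}). The only other delicate point is the unavoidable loss from Lipschitz to H\"older continuity in the last three estimates, caused by $D_x^2\psi_f$ and $D_xB$ being merely H\"older in $x$; this is why the exponent $\gamma=\gamma(\beta)$ of Lemma~\ref{LAC} occurs there.
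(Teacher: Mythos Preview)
Your proof is correct and follows the same Gronwall-cascade strategy as the paper: split the force terms, exploit the $C^{0,\gamma}$ regularity of $D_x^2\psi_f$ (via $\delxi\delxj\psi_{f_B}=\delxi\psi_{\delxj f_B}$ and Lemma~\ref{NPOT}\,(c)) and of $D_xB$ (via Lemma~\ref{LAC}\,(b)) for the point-shift terms, and close through the chain-rule representation $f=\mathring f\circ Z(0,t,\cdot)$. Two tactical differences are worth recording. In the Lipschitz step the paper works directly with the backward flow $Z_B(s,t,z)$, obtains after a Gronwall in $s$ the bound $|Z_B(0,t,z)-Z_H(0,t,z)|\le C\int_0^t\|f_B-f_H\|_\infty+C\|B-H\|_\WW$, and then closes a second Gronwall in $t$ for $\|f_B(t)-f_H(t)\|_\infty$; this avoids your inversion argument (which, incidentally, needs the Lipschitz bound $c_1$ on the slightly larger ball $B_{R_Z}$ rather than $B_R$ --- harmless, since the proof of Lemma~\ref{ES1} gives it there too). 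More interestingly, for the frozen-point term $D_x^2\psi_{f_B-f_H}$ in the derivative estimate the paper writes it as $D_x\psi_{\delx(f_B-f_H)}$, bounds it by $C\|\delz f_B(\tau)-\delz f_H(\tau)\|_\infty$, and runs a second coupled Gronwall for the pair $(\delz Z,\delz f)$; your Schauder-plus-interpolation route bounds this term directly by $C\|\rho_{f_B-f_H}\|_{C^{0,1-\gamma}}\le C\|B-H\|_\WW^\gamma$ and thereby decouples the estimate for $\delz Z$ from that for $\delz f$, which is a legitimate and somewhat cleaner alternative.
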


\begin{proof}
Let $B,H\in \MM$, $s,t\in[0,T]$ and $z\in\BR$ be arbitrary. Without loss of generality $s\le t$. Moreover, let $C>0$ denote a generic constant depending only on $\mathring f$, $T$ and $K$ and let $Z_B$ and $Z_H$ denote the solutions of the characteristic system to the fields $B$ and $H$ satisfying $Z_B(t,t,z) =z$ and $Z_H(t,t,z) =z$. Using Lemma \ref{ES1} and Lemma \ref{LAC} we obtain\vspace{-2mm}
\begin{align*}
&|Z_B(s) - Z_H(s)| \le \int\limits_s^t |\dot Z_B(\tau) - \dot Z_H(\tau)| \dtau\\
&\le \int\limits_s^t |V_B(\tau) - V_H(\tau)| + |\partial_x \psi_{f_B}(\tau,X_B(\tau))-\partial_x \psi_{f_H}(\tau,X_H(\tau))| \\[-3mm]
&\qquad\qquad\quad +|V_B(\tau)\times B(\tau,X_B(\tau)) - V_H(\tau)\times H(\tau,X_H(\tau))| \dtau \\[3mm]
&\le
C \int\limits_s^t \big( 1 + \|D_x^2 \psi_{f_B}(\tau)\|_\infty + \|B(\tau)\|_{W^{1,\infty}} \big)\; |Z_B(\tau) - Z_H(\tau)| \\[-3mm]
&\qquad\qquad\quad +\|\partial_x \psi_{f_B-f_H}(\tau)\|_{L^\infty(B_{R_Z}(0))}  + \|B(\tau)-H(\tau)\|_{L^\infty(B_{R_Z}(0))} \dtau 
\end{align*}
\pskip
Thus by Lemma \ref{NPOT}\,(c), Lemma \ref{ES1} and Gronwall's lemma,
\begin{align*}
|Z_B(s) - Z_H(s)| &\le C \int\limits_s^t \|{f_B}(\tau)-{f_H}(\tau)\|_\infty \mathrm d\tau + C \|B-H\|_\WW
\end{align*}
and then by chain rule,
\begin{align*}
&\|f_B(t)-f_H(t)\|_\infty \le \|D\mathring f\|_\infty\; \|Z_B(0,t,\cdot) - Z_H(0,t,\cdot)\|_{L^\infty(\BR)}\\
&\quad \le C \int\limits_0^t \|{f_B}(\tau)-{f_H}(\tau)\|_\infty \dtau + C \|B-H\|_\WW
\end{align*}
which yields
\begin{align}
\label{fB-fH}
\|f_B-f_H\|_{C([0,T];C_b)}&\le L_1 \|B-H\|_\WW\,, \\
\label{ZB-ZH}
\|Z_B-Z_H\|_{C([0,T];C_b(\BR))} &\le \ell_1 \|B-H\|_\WW\,,
\end{align}
if $\ell_1$ and $L_1$ are chosen suitably. Hence, by Lemma \ref{NPOT}\,(d),
\begin{align}
\label{HOELDPSI}
&|\delxi\delxj \psi_{f_B}(\tau,X_B(\tau)) - \delxi\delxj   \psi_{f_B}(\tau,X_H(\tau))| \notag \\[1mm]
&\quad= |\delxi\psi_{\delxj f_B}(\tau,X_B(\tau)) - \delxi \psi_{\delxj f_B}(\tau,X_H(\tau))|  \notag  \\
&\quad\le C\;|X_B(\tau) - X_H(\tau)|^{\gamma} \le C\;\|B-H\|_{L^2 ( 0,T;W^{1,\beta}(B_{R_Z}(0)) ) }^{\gamma}
\end{align}
for every $\tau\in[0,T]$ and every $i,j\in\{1,...,6\}$. Let now $i\in\{1,...,6\}$ be arbitrary. By Lemma \ref{ES1},
\begin{align*}
&|\delzi Z_B(s)-\delzi Z_H(s)| \le  \int\limits_s^t |\delzi \dot Z_B(\tau) - \delzi \dot Z_H(\tau)| \dtau \\
&\le
C \int\limits_s^t  \big(1+ \|D_x^2 \psi_{f_B}(\tau)\|_\infty + \|B(\tau)\|_{W^{1,\infty}}\big)\; | \delzi Z_B(\tau) - \delzi Z_H(\tau) |   \\[-4mm]
&\qquad \qquad \quad +  | D_x^2 \psi_{f_B}(\tau,X_B) - D_x^2 \psi_{f_H}(\tau,X_H) |  + |B(\tau,X_B) - H(\tau,X_H) |  \\
&\qquad \qquad \quad +  |D_x B(\tau,X_B) - D_x H(\tau,X_H) |  + \|D_x B(\tau)\|_\infty\; |V_B - V_H|  \dtau
\end{align*}
Now, applying \eqref{ZB-ZH}, \eqref{HOELDPSI}, Lemma \ref{LAC}\,(b) and Lemma \ref{ES1} yields
\begin{align*}
&|\delzi Z_B(s)-\delzi Z_H(s)| \\
& \le  C\int\limits_s^t  \big(1 + \|B(\tau)\|_{W^{2,\beta}}\big)\; | \delzi Z_B(\tau) - \delzi Z_H(\tau) | \dtau \\
&\quad + C \int\limits_s^t \|\delz f_B(\tau) - \delz f_H(\tau)\|_\infty \dtau \;+\; C\;\|B-H\|_\WW^{\gamma}
\end{align*}
and by Gronwall's lemma,
\begin{align}
\label{DZB-DZH-0}
|\delzi Z_B(s) - \delzi Z_H(s)| \le C\, \|B-H \|_\WW^{\gamma} + C \int\limits_s^t \|\delz f_B(\tau) - \delz f_H(\tau)\|_\infty\,.
\end{align}
Finally, by \eqref{ZB-ZH} and \eqref{DZB-DZH-0},
\begin{align*}
&\|\delz f_B(t) - \delz f_H(t)\|_\infty = \|\delz f_B(t) - \delz f_H(t)\|_{L^\infty(B_R(0))} \\[0.3cm]
&\quad\le C \|\delz Z_B(0) - \delz Z_H(0)\|_{L^\infty(B_R(0))} + C \|Z_B(0) - Z_H(0)\|_{L^\infty(B_R(0))} \\
&\quad\le C\, \| B  - H \|_\WW^{\gamma} + C \int\limits_0^t \|\delz f_B(\tau) - \delz f_H(\tau)\|_\infty\;,\vspace{-3mm}
\end{align*}
and consequently
\begin{align}
\label{DfB-DfH}
\|\delz f_B - \delz f_H\|_{C([0,T];C_b)} &\le L_2 \| B  - H \|_\WW^{\gamma},\\
\label{DZB-DZH}
\|\delzi Z_B - \delzi Z_H\|_{C([0,T];C_b(\BR))} &\le \ell_2 \| B  - H \|_\WW^{\gamma}
\end{align}
if $\ell_2$ and $L_2$ are chosen appropriately. The third $L_3$-inequality follows directly from \eqref{fB-fH} and \eqref{DfB-DfH} by representing $\delt f_B$ and $\delt f_H$ by their corresponding Vlasov equation.
\end{proof}

\subsection{Strong solutions for admissible external fields}

Now we will show that any field $B\in\BB$ still induces a unique strong solution which can be constructed as the limit of solutions $f_{B_k}$ where $(B_k)\subset\MM$ with $B_k\to B$ in $\WW$. Such a strong solution is defined as follows:

%
%

\begin{defn}
\label{WKS}
Let $B\in\BB$ be any admissible field. We call $f$ a \textbf{strong solution} of the initial value problem \eqref{VPSU} to the field $B$, iff the following holds:
\begin{enumerate}
\item[\textnormal{(i)}]		For all $1\le p \le \infty$, $f\in W^{1,2}(0,T;L^p(\RR^6)) \cap L^2(0,T;W^{1,p}(\RR^6))$\\ $\subset C([0,T];L^p(\RR^6))$ with
$$\|f\|_{W^{1,2}(0,T;L^p)} + \|f\|_{L^2(0,T;W^{1,p})} \le C$$ for some constant $C>0$ depending only on $\mathring f$, $T$, $K$ and $\beta$.
\item[\textnormal{(ii)}]	$f$ satisfies the Vlasov equation
$$\delt  f + v\cdot \partial_x  f - \partial_x \psi_f \cdot \partial_v  f + (v\times B) \cdot \partial_v  f= 0$$
almost everywhere on $[0,T]\times\RR^6$.
\item[\textnormal{(iii)}]	$f$ satisfies the initial condition $f\big\vert_{t=0}=\mathring f$ almost everywhere on $\RR^6$,
\item[\textnormal{(iv)}]	For every $t\in[0,T]$, $\supp f(t) \subset \BR$ where $R$ is the constant from Theorem \ref{GCS}.
\end{enumerate}
\end{defn}

\smallskip

First of all one can easily establish that such a strong solution is unique.

%
%

\begin{prop}
\label{UNIQS}
Let $B\in\BB$ be any field and suppose that there exists a strong solution $f$ of the initial value problem \eqref{VPSU} to the field $B$. Then this solution is unique.
\end{prop}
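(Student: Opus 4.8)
The plan is a standard $L^2$ Gronwall argument, exploiting that the difference of two strong solutions solves a linear transport equation whose transport fields are divergence free.

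First I would suppose $f_1,f_2$ are both strong solutions of \eqref{VPSU} for the same $B\in\BB$ and set $g:=f_1-f_2$. Since $\varphi\mapsto\psi_\varphi$ is linear, subtracting the two Vlasov equations (valid a.e.\ by (ii)) and rearranging the electric terms as $\partial_x\psi_{f_1}\cdot\partial_v f_1-\partial_x\psi_{f_2}\cdot\partial_v f_2=\partial_x\psi_{f_1}\cdot\partial_v g+\partial_x\psi_g\cdot\partial_v f_2$ gives, a.e.\ on $[0,T]\times\RR^6$,
\begin{align*}
\partial_t g + v\cdot\partial_x g - \partial_x\psi_{f_1}\cdot\partial_v g + (v\times B)\cdot\partial_v g = \partial_x\psi_g\cdot\partial_v f_2 .
\end{align*}
By Definition \ref{WKS}, $g\in W^{1,2}(0,T;L^2(\RR^6))\cap L^2(0,T;W^{1,2}(\RR^6))\subset C([0,T];L^2(\RR^6))$ with $g(0)=0$ and $\supp g(t)\subset\BR$ for all $t$; in particular $g(t)\in L^2_R(\RR^6)\cap W^{1,2}(\RR^6)$ for a.e.\ $t$, so $\psi_g(t)$ and $\partial_x\psi_g(t)$ are well defined and Lemma \ref{NPOT} applies to $g(t)$.

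Next I would test the equation for $g$ against $g$ itself. Since $g\in W^{1,2}(0,T;L^2)$, the map $t\mapsto\|g(t)\|_{L^2}^2$ is absolutely continuous with $\tfrac12\ddt\|g(t)\|_{L^2}^2=\int\partial_t g(t)\,g(t)\dxv$ for a.e.\ $t$. Substituting $\partial_t g$, I would show that the three transport-type terms vanish after an integration by parts: $\divergence_x v=0$, the factor $\partial_x\psi_{f_1}(t)$ is independent of $v$, and $\divergence_v(v\times B)=0$, so each reduces to $\tfrac12\int(\text{field})\cdot\nabla(g^2)=0$, the boundary contribution disappearing since $g(t)$ has compact support. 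The integrability needed to justify these manipulations comes from $g(t)\in W^{1,2}$ with compact support together with the bounds $\|\partial_x\psi_{f_1}(t)\|_\infty\le C\|f_1(t)\|_\infty$ (Lemma \ref{NPOT}(c)) and $\|B(t)\|_{C^{1,\gamma}}\le k_1\|B(t)\|_{W^{2,\beta}}$ (Lemma \ref{LAC}(b),(c)), each of which lies in $L^2(0,T)$ in time. Only the source term then survives, and I would estimate it by
\begin{align*}
\Big|\int\partial_x\psi_g(t)\cdot\partial_v f_2(t)\,g(t)\dxv\Big|\le\|\partial_v f_2(t)\|_\infty\,\|\partial_x\psi_g(t)\|_{L^2}\,\|g(t)\|_{L^2}\le C\,\|\partial_v f_2(t)\|_\infty\,\|g(t)\|_{L^2}^2,
\end{align*}
using Lemma \ref{NPOT}(b) with $p=2$, $q=\tfrac65$ and then Hölder's inequality on $\BR$ to bound $\|\partial_x\psi_g(t)\|_{L^2}\le C\|g(t)\|_{L^{6/5}}\le C(R)\|g(t)\|_{L^2}$.

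Finally, since Definition \ref{WKS}(i) gives $f_2\in L^2(0,T;W^{1,\infty})$, the function $t\mapsto\|\partial_v f_2(t)\|_\infty$ lies in $L^2(0,T)\subset L^1(0,T)$, so Gronwall's lemma forces $\|g(t)\|_{L^2}^2\le\|g(0)\|_{L^2}^2\exp\!\big(C\int_0^t\|\partial_v f_2(s)\|_\infty\ds\big)=0$ for every $t$; by continuity $f_1(t)=f_2(t)$ in $L^2(\RR^6)$ for all $t$, hence $f_1=f_2$ a.e.\ on $[0,T]\times\RR^6$. I expect the main difficulty to be bookkeeping rather than conceptual: rigorously justifying, at the regularity level of Definition \ref{WKS}, that $\|g(t)\|_{L^2}^2$ is absolutely continuous with the claimed derivative and that the integrations by parts killing the transport and magnetic terms are licit. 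Once that is settled, no information about characteristic curves is needed and the remaining computation is routine.
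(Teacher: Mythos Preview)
Your proposal is correct and follows essentially the same $L^2$--energy/Gronwall argument as the paper's proof: subtract the two Vlasov equations, test against the difference, kill the divergence-free transport terms by integration by parts, and bound the remaining source term via $\|\partial_x\psi_g(t)\|_{L^2}\le C(R)\|g(t)\|_{L^2}$ together with $\|\partial_v f_2(t)\|_\infty\in L^1(0,T)$. The only cosmetic difference is the splitting of the electric term (you pair $\partial_x\psi_{f_1}$ with $\partial_v g$, the paper pairs $\partial_x\psi_g$ with $\partial_v h$), and you are more explicit than the paper about the justification of the integration by parts and the absolute continuity of $t\mapsto\|g(t)\|_{L^2}^2$.
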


\begin{proof} 
Suppose that there exists another strong solution $g$ to the field $B$. Then the difference $h:=f-g$ satisfies
\begin{align*}
\delt h + v\cdot\delx h -\delx\psi_h \cdot\delv f -\delx \psi_g \cdot \delv h +(v\times B)\cdot\delv h = 0\;.
\end{align*}
almost everywhere on $[0,T]\times\RR^6$. Thus by integration by parts,
\begin{align*}
&\ddt \|h(t)\|_{L^2}^2 = 2\int \delt h \; h \dz = 2 \int \delx \psi_h\cdot \delv f\; h\dz \\
&\quad \le 2 \; \|\delv f(t)\|_{\infty}\; \|\delx\psi_h(t)\|_{L^2}\; \|h(t)\|_{L^2} \le C(R)\; \|\delv f(t)\|_\infty\;  \|h(t)\|_{L^2}^2\;.
\end{align*}
As $t\mapsto \|h(t)\|_{L^2}^2$ is continuous with $\|h(0)\|_{L^2}^2 =0$, Gronwall's lemma yields $\|h(t)\|_{L^2}^2=0$ for all $t\in[0,T]$. Hence for all $t\in[0,T]$, $f(t) = g(t)$ almost everywhere on $\RR^6$ which means uniqueness.
\end{proof}

Now we will show that any admissible field $B\in\BB$ actually induces a unique strong solution. Note that this solution is even more regular than it was demanded in the definition. However the weaker requirements of the definition will be essential in the later approach (see Proposition \ref{COMP}) and it will also be important that uniqueness was established under those weaker conditions.

%
%

\begin{thm}
\label{GWS}
Let $B\in\BB$. Then there exists a unique strong solution $f$ of the initial value problem \eqref{VPSU} to the field $B$. Moreover this solution satisfies the following properties which are even stronger than the conditions that are demanded in Definition \ref{WKS}:
\begin{enumerate}
\item[\textnormal{(a)}]		$f\in W^{1,2}(0,T;C_b(\RR^6))\cap C([0,T];C^1_b(\RR^6))\cap L^\infty(0,T;W^{2,\beta}(\RR^6))$ with
\begin{gather*}
\|f(t)\|_p = \|\mathring f\|_p\;,\quad t\in [0,T]\;,\quad 1\le p\le \infty\\
\text{and}\quad
\|f\|_{W^{1,2}(0,T;C_b)} + \|f\|_{C([0,T];C^1_b)} + \|f\|_{L^\infty(0,T;W^{2,\beta})} \le C
\end{gather*}
for some constant $C>0$ depending only on $\mathring f$, $T$, $K$ and $\beta$.
\item[\textnormal{(b)}]	$f$ satisfies the initial condition $f\big\vert_{t=0}=\mathring f$ everywhere on $\RR^6$,
\end{enumerate}
\end{thm}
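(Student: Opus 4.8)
The plan is to realize $f$ as the limit of the classical solutions $f_{B_k}$ constructed in Theorem~\ref{GCS}, where $(B_k)\subset\MM$ is a sequence approximating $B$ in $\WW$ as provided by Lemma~\ref{LAC}\,(d). Uniqueness is already settled by Proposition~\ref{UNIQS}, so the whole burden is existence together with the claimed regularity and bounds. First I would fix such a sequence $(B_k)$ with $\|B-B_k\|_\WW\to 0$; since $\|B_k\|_\WW\le 2K$, Theorem~\ref{GCS} applies and each $f_{B_k}\in C^1([0,T]\times\RR^6)$ is supported in $\BR$ uniformly in $k$ (the constant $R$ there depends only on $\mathring f,T,K,\beta$). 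Moreover each $B_k$ lies in $\MM$, so Lemma~\ref{ES1} gives uniform bounds: $\|\delz f_{B_k}\|_{C([0,T];C_b)}\le c_2$, $\|\delt f_{B_k}\|_{L^2(0,T;C_b)}\le c_4$, and $\|D_z^2 f_{B_k}\|_{L^\infty(0,T;L^\beta)}\le c_6$, all independent of $k$.

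Next I would invoke the Cauchy estimates of Lemma~\ref{LIP}: since $(B_k)$ is Cauchy in $\WW$, the inequalities there show that $(f_{B_k})$ is Cauchy in $C([0,T];C_b)$, $(\delz f_{B_k})$ is Cauchy in $C([0,T];C_b)$, and $(\delt f_{B_k})$ is Cauchy in $L^2(0,T;C_b)$. Hence there is a limit $f$ with $f_{B_k}\to f$ in $C([0,T];C^1_b)$ and $\delt f_{B_k}\to \delt f$ in $L^2(0,T;C_b)$; in particular $f\in W^{1,2}(0,T;C_b(\RR^6))\cap C([0,T];C^1_b(\RR^6))$, the support condition $\supp f(t)\subset\BR$ passes to the limit, and $\|f(t)\|_p=\lim\|f_{B_k}(t)\|_p=\lim\|\mathring f\|_p=\|\mathring f\|_p$ by Lemma~\ref{FL2}\,(b). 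For the second-order bound, the uniform estimate $\|D_z^2 f_{B_k}\|_{L^\infty(0,T;L^\beta)}\le c_6$ together with reflexivity/weak-$*$ compactness of $L^\infty(0,T;L^\beta)$ (here $\beta>3$, so $1<\beta<\infty$) yields a weakly-$*$ convergent subsequence whose limit must be $D_z^2 f$ (identified via the already-established $C^1$ convergence and integration against test functions), giving $f\in L^\infty(0,T;W^{2,\beta}(\RR^6))$ with $\|f\|_{L^\infty(0,T;W^{2,\beta})}\le c_6$. Collecting the three pieces gives the full norm bound in (a) with a constant depending only on $\mathring f,T,K,\beta$.

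To check that $f$ is actually a strong solution I would pass to the limit in the Vlasov equation $\delt f_{B_k}+v\cdot\delx f_{B_k}-\delx\psi_{f_{B_k}}\cdot\delv f_{B_k}+(v\times B_k)\cdot\delv f_{B_k}=0$. The linear transport terms converge in $C([0,T];C_b)$ (locally, on $\BR$) from the $C^1$ convergence of $f_{B_k}$; the term $\delx\psi_{f_{B_k}}\cdot\delv f_{B_k}$ converges because $f_{B_k}\to f$ in $C([0,T];C_b)$ forces $\rho_{f_{B_k}}\to\rho_f$ and hence $\delx\psi_{f_{B_k}}\to\delx\psi_f$ locally uniformly (Lemma~\ref{NPOT}\,(c) gives the needed $L^\infty$ and Hölder control on the fixed ball $\BR$), while $\delv f_{B_k}\to\delv f$ in $C([0,T];C_b)$; and the magnetic term splits as $(v\times B_k)\cdot\delv f_{B_k}-(v\times B)\cdot\delv f=(v\times(B_k-B))\cdot\delv f_{B_k}+(v\times B)\cdot(\delv f_{B_k}-\delv f)$, the first summand tending to $0$ in $L^2(0,T;C_b(\BR))$ because $\|B_k-B\|_{L^2(0,T;L^\infty(\BR))}\to 0$ (Lemma~\ref{LAC}\,(b),(c)) and $\delv f_{B_k}$ is uniformly bounded, the second tending to $0$ in $C([0,T];C_b(\BR))$. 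Thus every term converges in $L^2(0,T;C_b(\BR))$ and the equation holds a.e.\ on $[0,T]\times\RR^6$; the initial condition $f(0)=\mathring f$ holds everywhere since $f_{B_k}(0)=\mathring f$ for all $k$ and $f_{B_k}(0)\to f(0)$ uniformly. Finally, since the limit $f$ is a strong solution in the sense of Definition~\ref{WKS} (the stronger bounds in (a) imply conditions (i)--(iv), using $C([0,T];C^1_b)\hookrightarrow L^2(0,T;W^{1,p})$ on the fixed support and $W^{1,2}(0,T;C_b)\hookrightarrow W^{1,2}(0,T;L^p)$ there), Proposition~\ref{UNIQS} shows it is the unique strong solution, so the limit does not depend on the chosen approximating sequence.

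The main obstacle is the identification of the second-order weak-$*$ limit with $D_z^2 f$ and, relatedly, making sure the convergence of $\delx\psi_{f_{B_k}}$ is strong enough (locally uniform, not merely $L^2$) for the nonlinear term — this is where Lemma~\ref{NPOT}\,(c) and the uniform compact support on $\BR$ are essential. Everything else is a routine assembly of Lemmata~\ref{LAC}, \ref{FL2}, \ref{NPOT}, \ref{ES1}, \ref{LIP} and Proposition~\ref{UNIQS}.
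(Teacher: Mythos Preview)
Your proposal is correct and follows essentially the same route as the paper: approximate $B$ by $(B_k)\subset\MM$ via Lemma~\ref{LAC}\,(d), use Lemma~\ref{LIP} to get a Cauchy sequence $(f_{B_k})$ in $C([0,T];C^1_b)\cap W^{1,2}(0,T;C_b)$, extract the $L^\infty(0,T;W^{2,\beta})$ bound by weak-$*$ compactness and identification of the limit, and then verify (i)--(iv) and invoke Proposition~\ref{UNIQS}. Your term-by-term analysis of the passage to the limit in the Vlasov equation is a bit more explicit than the paper's (which simply asserts that the strong convergence in $W^{1,2}(0,T;C_b)\cap C([0,T];C^1_b)$ ``directly implies condition (ii)''), but the argument is the same.
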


\begin{proof} Let $B\in\BB$ arbitrary. According to Lemma \ref{LAC}, we can choose some sequence ${(B_k)_{k\in\NN}\subset \MM}$ with $B_k\to B$ for $k\to\infty$ in $\WW$.
Now Lemma~\ref{LIP} and Lemma~\ref{ES1} provide that for all $t\in[0,T]$ and $j,k\in\NN$,
\begin{align*}
\|f_{B_k}-f_{B_j}\|_{C([0,T];C_b)} &\le L_1 \|B_k-B_j\|_\WW\;, \\
\|\partial_z f_{B_k}-\partial_z f_{B_j}\|_{C([0,T];C_b)} &\le L_3 \|B_k-B_j\|_\WW^{\gamma}\;,\\
\|\partial_t f_{B_k}-\partial_t f_{B_j}\|_{L^2(0,T;C_b)} &\le L_4 \|B_k-B_j\|_\WW\;,\\
\|D_z^2 f_{B_k}\|_{L^\infty(0,T;L^\beta)} &\le c_6\;.
\end{align*}
where $\gamma=\gamma(\beta)$ is the constant from Lemma \ref{LAC}. Hence, $(f_{B_k})_{n\in\NN}$ is a Cauchy sequence in $C([0,T];C^1_b)\cap W^{1,2}(0,T;C_b)$. Due to completeness there exists a unique function ${f\in C([0,T];C^1_b)\cap W^{1,2}(0,T;C_b)}$ such that $f_{B_k}\to f$ in this space. Since $(f_{B_k})$ is also bounded in $L^\infty(0,T;W^{2,\beta})$ by some constant depending only on $\mathring f$, $T$, $K$ and $\beta$, the Banach-Alaoglu theorem states that there exists some function ${\bar f\in L^\infty(0,T;W^{2,\beta})}$ such that $f_{B_k}\overset{*}{\rightharpoonup} \bar f$ up to a subsequence. This means that for any ${\alpha\le 2}$, the sequence $(D_z^\alpha f_{B_k})$ converges to $D_z^\alpha \bar f$ with respect to the weak-*-topology on $[L^1(0,T;L^{\beta'})]^* \conf L^\infty(0,T;L^\beta)$ where $\nicefrac{1}{\beta} +\nicefrac{1}{\beta'} =1$. Because of uniqueness of the limit it holds that $D_z^\alpha f = D_z^\alpha \bar f$ and thus
$$ f = \bar f \in W^{1,2}(0,T;C_b)\cap C([0,T];C^1_b) \cap L^\infty(0,T;W^{2,\beta}).$$
To show that $f$ is a strong solution to the field $B$, we have to verify the conditions from Definition \ref{WKS}.  The strong convergence of $(f_{B_k})$ in $W^{1,2}(0,T;C_b)$ and $C([0,T];C^1_b)$ directly implies condition (ii). Moreover,\vspace{-1mm}
\begin{align*}
\big| f(0) - \mathring f \big| = \big| f(0) - f_{B_k}(0) \big| \le C\; \| f-f_{B_k} \|_{C([0,T];C_b)} \to 0,\qquad k\to \infty.
\end{align*}
Thus $f(0)=\mathring f$ everywhere on $\RR^6$ that is (b) which directly implies (iii). Due to uniform convergence and continuity of $f$, it is evident that $f(t)$ is also compactly supported in $B_R(0)$ for every $t\in [0,T]$. That is (iv). For $1\le q\le \infty$ arbitrary, $t\in [0,T]$ and $k\in\NN$, we have
\begin{align*}
&\Big| \|f(t)\|_q - \| \mathring f \|_q \Big| = \Big| \|f(t)\|_q - \| f_{B_k}(t) \|_q \Big|\le \|f(t) - f_{B_k}(t) \|_q \\
&\quad \le \big( \lambda(B_R(0)) \big)^{\frac 1 q} \|f(t) - f_{B_k}(t) \|_\infty  \le \big( 1+\lambda(B_R(0)) \big) \|f - f_{B_k} \|_{C([0,T];C_b)} \to 0,
\end{align*}
if $k\to\infty$ where $\frac 1 q \conf 0$ if $q=\infty$. This means that $\|f(t)\|_q = \| \mathring f \|_q$ for every $t\in[0,T]$ and ${1\le q\le\infty}$. Moreover we can choose some fixed $k\in\NN$ such that
$$\|f-f_{B_k}\|_{W^{1,2}(0,T;C_b)} + \|f-f_{B_k}\|_{C([0,T];C^1_b)} \le 1\;.$$
From Lemma \ref{ES1} we know that $\|f_{B_k}\|_{C([0,T];C^1_b)} \le C$, $\|f_{B_k}\|_{W^{1,2}(0,T;C_b)} \le C$ and $\|f_{B_k}\|_{L^\infty(0,T;W^{2,\beta})} \le C$. Thus
\begin{align*}
\|f\|_{W^{1,2}(0,T;C_b)} &\le \|f-f_{B_k}\|_{W^{1,2}(0,T;C_b)} + \|f_{B_k}\|_{W^{1,2}(0,T;C_b)} \le C, \\
\|f\|_{C([0,T];C^1_b)} &\le \|f-f_{B_k}\|_{C([0,T];C^1_b)}+ \|f_{B_k}\|_{C([0,T];C^1_b)}\le C.
\end{align*}
Moreover by the weak-* lower semicontinuity of the norm,
\begin{align*}
\|f\|_{L^\infty(0,T;W^{2,\beta})} &\le \underset{k\to\infty}{\lim\inf} \|f_{B_k}\|_{L^\infty(0,T;W^{2,\beta})}
\le C. 
\end{align*}
This proves (a) which includes condition (i). Finally, uniqueness follows directly from Proposition \ref{UNIQS}.
\end{proof}

Now that we have showed that any magnetic field $B\in\BB$ yields a unique strong solution of the initial value problem \eqref{VPSU}, we can define an operator mapping every admissible field onto its induced state.

\begin{defn}
\label{CSO}
The operator
\begin{align*}
f. \;\colon \BB \to C([0,T];L^2(\RR^6)),\; B \mapsto f_B
\end{align*}
is called the \textbf{field-state operator}. At this point $f_B$ denotes the unique strong solution of \eqref{VPSU} that is induced by the field $B\in\BB$.
\end{defn}

From now on the notation $f_B$ is to be understood as the value of the field-state operator at point $B\in\BB$.

\section[Continuity and compactness of the field-state operator]{Continuity and compactness of the field-state operator}

Obviously the Lipschitz estimates of Lemma \ref{LIP} hold true for the strong solutions by approximation. 

\begin{cor}
	\label{WLIP}
	Let $L_1,\,L_2,\,L_3,\,c_2,\,c_4,\,c_6$ be the constants from Lemma \ref{ES1} and Lemma~\ref{LIP}. Then for all $B,H\in\BB$,\vspace{-2mm}
	\begin{gather*}
	\begin{aligned}
	\|f_B-f_H\|_{C([0,T];C_b)} &\;\le\; L_1 \|B-H\|_\WW\;,\\
	\|\partial_z f_B-\partial_z f_H\|_{C([0,T];C_b)} &\;\le\; L_2 \|B-H\|_\WW^{\gamma}\;,\\
	\|\partial_t f_B-\partial_t f_H\|_{L^2(0,T;C_b)} &\;\le\; L_3 \|B-H\|_\WW^{\gamma}\;,
	\end{aligned}\\
	\|\delz f_B\|_{C([0,T];C_b)} \le c_2,\;\; \|\delt f_B\|_{L^2(0,T;C_b)} \le c_4,\;\; \|D_z^2 f_B\|_{L^\infty(0,T;W^{2,\beta})} \le c_6.
	\end{gather*}
\end{cor}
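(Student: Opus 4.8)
The plan is to obtain Corollary~\ref{WLIP} from Lemma~\ref{LIP} and Lemma~\ref{ES1} purely by a density/approximation argument, exploiting that Theorem~\ref{GWS} constructs the strong solution $f_B$ precisely as a limit of classical solutions $f_{B_k}$ with $(B_k)\subset\MM$. First I would fix $B,H\in\BB$ and, invoking Lemma~\ref{LAC}\,(d), choose sequences $(B_k),(H_k)\subset\MM$ with $\|B_k-B\|_\WW\to 0$ and $\|H_k-H\|_\WW\to 0$. By the proof of Theorem~\ref{GWS}, $f_{B_k}\to f_B$ and $f_{H_k}\to f_H$ in $C([0,T];C^1_b)\cap W^{1,2}(0,T;C_b)$, so in particular $\partial_z f_{B_k}\to\partial_z f_B$, $\partial_t f_{B_k}\to\partial_t f_B$ in the respective norms, and likewise for $H$.

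Next I would simply pass to the limit in the three inequalities of Lemma~\ref{LIP} applied to the smooth pairs $(B_k,H_k)$. For the first estimate, $\|f_{B_k}-f_{H_k}\|_{C([0,T];C_b)}\le L_1\|B_k-H_k\|_\WW$; the left side converges to $\|f_B-f_H\|_{C([0,T];C_b)}$ by the strong convergence above, and the right side converges to $L_1\|B-H\|_\WW$, giving the claim. The second and third estimates are handled identically, using that $t\mapsto t^\gamma$ is continuous so $\|B_k-H_k\|_\WW^\gamma\to\|B-H\|_\WW^\gamma$; here the left sides converge because $\partial_z f_{B_k}-\partial_z f_{H_k}\to\partial_z f_B-\partial_z f_H$ in $C([0,T];C_b)$ and $\partial_t f_{B_k}-\partial_t f_{H_k}\to\partial_t f_B-\partial_t f_H$ in $L^2(0,T;C_b)$.

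For the three uniform bounds, I would proceed similarly from Lemma~\ref{ES1}: for each $k$, $\|\partial_z f_{B_k}\|_{C([0,T];C_b)}\le c_2$ and $\|\partial_t f_{B_k}\|_{L^2(0,T;C_b)}\le c_4$, and passing to the limit (the norms are continuous under the strong convergence) yields $\|\partial_z f_B\|_{C([0,T];C_b)}\le c_2$ and $\|\partial_t f_B\|_{L^2(0,T;C_b)}\le c_4$. For the $c_6$-bound on $D_z^2 f_B$ in $L^\infty(0,T;L^\beta)$ — note the statement writes $W^{2,\beta}$ but the quantity is $\|D_z^2 f_B\|$, matching the $L^\infty(0,T;L^\beta)$ bound of Lemma~\ref{ES1}, and this is exactly the weak-$*$ limit already identified in the proof of Theorem~\ref{GWS} — I would use weak-$*$ lower semicontinuity of the norm: $\|D_z^2 f_B\|_{L^\infty(0,T;L^\beta)}\le\liminf_k\|D_z^2 f_{B_k}\|_{L^\infty(0,T;L^\beta)}\le c_6$, as was already recorded in the proof of Theorem~\ref{GWS}.

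There is essentially no obstacle here: the corollary is a bookkeeping consequence of the convergences established inside the proof of Theorem~\ref{GWS} together with the continuity of norms and of $t\mapsto t^\gamma$. The only point requiring a word of care is that Lemma~\ref{LIP} is stated for pairs in $\MM$ (with the $\|\cdot\|_\WW\le 2K$ slack), so one must verify that the approximating sequences can indeed be taken in $\MM$ — which is exactly the content of Lemma~\ref{LAC}\,(d) — and that taking limits in both arguments simultaneously is legitimate, which follows since each sequence converges in $\WW$ independently and the right-hand sides depend continuously on $\|B_k-H_k\|_\WW$. Hence the corollary follows immediately.
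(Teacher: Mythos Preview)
Your proposal is correct and is exactly the approximation argument the paper has in mind; the paper itself declares the proof ``obvious'' and only remarks that ``the Lipschitz estimates of Lemma~\ref{LIP} hold true for the strong solutions by approximation.'' Your write-up simply spells out this density argument in detail, including the weak-$*$ lower semicontinuity step for the $c_6$-bound that is already carried out inside the proof of Theorem~\ref{GWS}.
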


The proof of this Corollary is obvious. I states that the field-state operator is globally Lipschitz-continuous with respect to the norm on $C\big([0,T];C_b\big)$ and globally Hölder-continuous with exponent $\gamma=\gamma(\beta)$ with respect to the norm on $W^{1,2}\big(0,T;C_b\big)$ and the norm on $C\big([0,T];C^1_b\big)$. \pskip

The following proposition provides (weak) compactness of the field-state operator that will be very useful in terms of variational calculus.

\begin{prop}
	\label{COMP}
	Let $(B_k)_{k\in\NN} \subset \BB$ be a sequence that is converging weakly in $\WW$ to some limit $B\in\BB$. Then it has a subsequence $(B_{k_j})$ of $(B_k)$ such that
	\begin{align*}
	f_{B_{k_j}} &\rightharpoonup f_B \quad\text{in}\quad W^{1,2}(0,T;L^p)\cap L^2(0,T;W^{1,p}) \cap L^2(0,T;W^{2,\beta}),\; 1\le p <\infty,\\
	f_{B_{k_j}} &\to f_B \quad\text{in}\quad L^2([0,T]\times\RR^6)
	\end{align*}
	if $j$ tends to infinity.
\end{prop}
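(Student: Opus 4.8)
The plan is to exploit the uniform bounds from Corollary~\ref{WLIP}, extract weakly convergent subsequences, pass to the limit in the Vlasov equation, and identify the limit with $f_B$ via the uniqueness statement of Proposition~\ref{UNIQS}. First I would note that by Corollary~\ref{WLIP} the sequence $(f_{B_k})$ is bounded in $W^{1,2}(0,T;C_b) \cap C([0,T];C^1_b) \cap L^\infty(0,T;W^{2,\beta})$, and every $f_{B_k}$ is supported in $\BR$ for all $t \in [0,T]$, with $R$ independent of $k$. Because of the uniform compact support, these bounds upgrade to bounds in $W^{1,2}(0,T;L^p) \cap L^2(0,T;W^{1,p}) \cap L^2(0,T;W^{2,\beta})$ for every $1 \le p < \infty$ (the $C_b$-norm on a fixed ball controls every $L^p$-norm). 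The reflexivity of these spaces then lets me extract a subsequence $(f_{B_{k_j}})$ converging weakly in each of them to some limit $\tilde f$; by uniqueness of weak limits the limit is the same $\tilde f$ in all the spaces, and $\tilde f$ inherits the support property $\supp \tilde f(t) \subset \BR$.

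The next step is to obtain \emph{strong} convergence in $L^2([0,T]\times\RR^6)$. Here I would invoke an Aubin--Lions type compactness argument: $(f_{B_{k_j}})$ is bounded in $L^2(0,T;W^{1,2})$ and its time derivatives are bounded in $L^2(0,T;L^2)$, so the sequence is relatively compact in $L^2(0,T;L^2_{\mathrm{loc}})$; combined with the uniform compact support in $\BR$ this gives relative compactness in $L^2(]0,T[\times\RR^6) = L^2(0,T;L^2(\RR^6))$ (using Lemma~\ref{SOBBOCH}(c)). Passing to a further subsequence, $f_{B_{k_j}} \to \tilde f$ strongly in $L^2([0,T]\times\RR^6)$, hence also pointwise a.e.\ along a sub-subsequence.

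Now I would pass to the limit in the weak (distributional) formulation of the Vlasov equation $\delt f_{B_{k_j}} + v\cdot\delx f_{B_{k_j}} - \delx\psi_{f_{B_{k_j}}}\cdot\delv f_{B_{k_j}} + (v\times B_{k_j})\cdot\delv f_{B_{k_j}} = 0$. The linear terms $\delt f_{B_{k_j}}$ and $v\cdot\delx f_{B_{k_j}}$ pass to the limit by weak convergence in $L^2$. For the Lorentz term $(v\times B_{k_j})\cdot\delv f_{B_{k_j}}$ I would combine the weak convergence $B_{k_j}\rightharpoonup B$ in $\WW$ (hence weakly in $L^2(]0,T[\times\RR^3)$) with the strong convergence $\delv f_{B_{k_j}} \to \delv\tilde f$; here strong convergence of the derivatives follows from the Hölder estimate in Corollary~\ref{WLIP} once weak convergence of $B_{k_j}$ in $\WW$ is shown to imply $\|B_{k_j}-B\|_\WW \to 0$ along a subsequence --- but that is \emph{false} in general, so instead I would argue that $\delv f_{B_{k_j}}$ is bounded in $C([0,T];C^{0,\gamma})$ and supported in $\BR$, hence precompact in $C([0,T];C_b)$ by Arzelà--Ascoli, so it converges strongly there; this handles the product against the weakly convergent $v\times B_{k_j}$. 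For the nonlinear field term $\delx\psi_{f_{B_{k_j}}}\cdot\delv f_{B_{k_j}}$ the strong convergence $f_{B_{k_j}}\to\tilde f$ in $L^2$ together with the continuity of the linear operator $\delx\psi.$ (Lemma~\ref{NPOT}) gives $\delx\psi_{f_{B_{k_j}}} \to \delx\psi_{\tilde f}$ strongly, and again the strong convergence of $\delv f_{B_{k_j}}$ closes the argument. Thus $\tilde f$ satisfies the Vlasov equation a.e.; since $f_{B_{k_j}}(0)=\mathring f$ for all $j$ and the convergence is strong in $C([0,T];L^2)$ (or via the trace theorem for $W^{1,2}(0,T;L^2)$), $\tilde f(0)=\mathring f$. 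Hence $\tilde f$ is a strong solution of \eqref{VPSU} to the field $B$ in the sense of Definition~\ref{WKS} (the norm bound (i) passes to the limit by weak lower semicontinuity), so by Proposition~\ref{UNIQS}, $\tilde f = f_B$. Since the limit is independent of the chosen subsequence, the convergences hold along the originally extracted subsequence $(B_{k_j})$.

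The main obstacle I expect is the Lorentz term: one cannot upgrade weak convergence of $B_{k_j}$ in $\WW$ to strong convergence, so the product $(v\times B_{k_j})\cdot\delv f_{B_{k_j}}$ must be handled as a (weak)$\times$(strong) pairing, which forces me to establish \emph{strong} compactness of $(\delv f_{B_{k_j}})$ in $C([0,T];C_b)$ --- this is where the uniform Hölder bound on $\delz f_{B_k}$ from the estimates behind Corollary~\ref{WLIP}, plus the uniform compact support, are essential via Arzelà--Ascoli. Everything else (extracting subsequences, lower semicontinuity of norms, identifying the limit's initial data and support) is routine given the earlier lemmata.
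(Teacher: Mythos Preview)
Your overall strategy --- extract weak limits, upgrade to strong $L^2$ convergence by compactness, pass to the limit in the Vlasov equation, and identify the limit as $f_B$ via Proposition~\ref{UNIQS} --- is exactly the paper's. The difference lies in how the two product terms are handled, and here your Arzel\`a--Ascoli argument for the Lorentz term has a genuine gap.

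You claim that $\delv f_{B_{k_j}}$ is bounded in $C([0,T];C^{0,\gamma})$ and therefore precompact in $C([0,T];C_b)$ by Arzel\`a--Ascoli. Two problems arise. First, the available estimates only give $\|D_z^2 f_B\|_{L^\infty(0,T;L^\beta(\RR^6))}\le c_6$, and since the hypothesis is merely $\beta>3$ (not $\beta>6$), Sobolev embedding does \emph{not} place $W^{1,\beta}(\RR^6)$ into $C^{0,\gamma}(\RR^6)$; a spatial H\"older bound on $\delz f_{B_k}$ is not furnished by Corollary~\ref{WLIP} and would require a separate argument through the characteristic flow. Second, and more seriously, a uniform bound in $C([0,T];C^{0,\gamma})$ alone does \emph{not} give precompactness in $C([0,T];C_b)$: Arzel\`a--Ascoli in $C([0,T];X)$ also demands equicontinuity in $t$, which would need control of $\delt\delv f_{B_k}$ with values in $C_b$. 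Differentiating the Vlasov equation in $v$ produces terms containing $D_z^2 f_{B_k}$, bounded only in $L^\infty(0,T;L^\beta)$, so $\delt\delv f_{B_k}$ is not bounded in $L^2(0,T;C_b)$ and the required time equicontinuity is unavailable. You invoke the same unproven strong convergence of $\delv f_{B_k}$ for the self-consistent field term as well, though there a weak--strong pairing ($\delv f_{B_k}\rightharpoonup\delv\tilde f$ against $\delx\psi_{f_k}\to\delx\psi_{\tilde f}$ strongly) would in fact suffice.

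The paper bypasses all of this by integrating by parts: it tests against $\varphi\in C_c^\infty(]0,T[\times\RR^6)$ and moves every $z$-derivative onto $\varphi$, writing $\int f_k\,\mathbf{V}(\varphi,f_k,B_k)\,\mathrm d(t,z)=0$ with $\mathbf{V}(\varphi,f,B)=\delt\varphi+v\cdot\delx\varphi-\delx\psi_f\cdot\delv\varphi+(v\times B)\cdot\delv\varphi$. The Lorentz contribution then reads $f_k\,(v\times B_k)\cdot\delv\varphi$, and the product of the \emph{strongly} convergent $f_k$ (Rellich--Kondrachov from $f_k\rightharpoonup f$ in $H^1(]0,T[\times\RR^6)$ together with the fixed support in $\BR$) against the \emph{weakly} convergent $(v\times B_k)\cdot\delv\varphi$ passes to the limit with no compactness on $\delv f_k$ whatsoever. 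This integration-by-parts step is the missing idea; once you adopt it, the rest of your outline goes through unchanged.
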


\begin{proof} Suppose that $(B_k)_{k\in\NN} \subset \BB$ and $B\in\BB$ such that $B_k \rightharpoonup B$ in $\WW$. By Theorem~\ref{GWS}, $f_k:=f_{B_k}$ is bounded in $W^{1,2}(0,T;L^p) \cap L^2(0,T;W^{1,p}\cap W^{2,\beta})$ for every $1\le p \le\infty$. Note that this bound can be chosen independent of $p$. Hence the Banach-Alaoglu theorem and Cantor's diagonal argument \linebreak imply that $(f_k)$ is converging weakly in $W^{1,2}(0,T;L^m) \cap L^2(0,T;W^{1,m}\cap W^{2,\beta})$, for every integer $m \ge 2$, up to a subsequence. Thus there exists some function \linebreak $f\in W^{1,2}(0,T;L^m) \cap L^2(0,T;W^{1,m}\cap W^{2,\beta})$ for every integer $m\ge 2$ such that
	\begin{align*}
	f_k \rightharpoonup f \;\;\text{in}\;\; W^{1,2}(0,T;L^m)\cap L^2(0,T;W^{1,m}) \cap L^2(0,T;W^{2,\beta}),\;\; m\in\NN, m\ge 2.
	\end{align*}
	Thus, by interpolation, $f\in W^{1,2}(0,T;L^p)\cap L^2(0,T;W^{1,p}) \cap L^2(0,T;W^{2,\beta})$ for every $2\le p <\infty$. We will now show that $f$ is a strong solution to the field $B$ by verifying the conditions from Definition \ref{WKS}. \pskip
	
	\textit{Condition }(iv): Let $\eps>0$ be arbitrary. We will now assume that there exists some measurable set $M\subset [0,T]\times\big( \RR^6\setminus \BR \big)$ with Lebesgue-measure $\lambda(M)>0$ such that $f>\eps$ almost everywhere on $M$. Then
	\begin{align*}
	0 < \eps \lambda(M) < \int\limits_M f \;\mathrm d(t,z) = \int\limits_M f-f_k \;\mathrm d(t,z) = \int (f-f_k) \mathds{1}_{M} \;\mathrm d(t,z) \to 0
	\end{align*}
	as $k\to\infty$ which is a contradiction. The case $f<-\eps$ can be treated analogously. Hence $-\eps<f<\eps$ almost everywhere on $[0,T]\times\big( \RR^6\setminus \BR \big)$ which immediately yields $f=0$ almost everywhere on $[0,T]\times\big( \RR^6\setminus \BR \big)$ because $\eps$ was arbitrary. Since $W^{1,2}(0,T;L^p)$ is continuously embedded in $C([0,T];L^p)$ by Sobolev's embedding theorem, we have $\supp f(t) \subset \BR$ even for all $t\in[0,T]$. \pskip
	
	\textit{Condition }(i): The fact that $\supp f(t) \subset B_R(0)$ for all $t\in[0,T]$ directly implies that $f\in W^{1,2}(0,T;L^p)\cap L^2(0,T;W^{1,p})$ for every $1\le p <\infty$ by interpolation. Then we can easily conclude that
	\begin{align*}
	f_k \rightharpoonup f \quad\text{in}\quad W^{1,2}(0,T;L^p)\cap L^2(0,T;W^{1,p})\cap L^2(0,T;W^{2,\beta}), \; 1\le p < \infty\,.
	\end{align*}
	The inequality $\|f\|_{W^{1,2}(0,T;L^p(\RR^6))} + \|f\|_{L^2(0,T;W^{1,p})} \le C$
	where $C>0$ depends only on $\mathring f$, $T$, $K$ and $\beta$ follows directly from the weak convergence and the weak lower semicontinuity of the norm. Since $C$ does not depend on $p$ this inequality holds true for $p=\infty$.\pskip
	
	\textit{Condition }(iii): It holds that $f_k \rightharpoonup f$ in $W^{1,2}(0,T;L^2)$ with $f_k(0) = \mathring f$ almost everywhere on $\RR^6$ for all $k\in\NN$. By Mazur's lemma we can construct some sequence $(f_k^*)_{k\in\NN}$ such that $f_k^*\to f$ in $W^{1,2}(0,T;L^2)$ where for any $k\in\NN$, $f_k^*$ is a convex combination of $f_1,...,f_k$. Then of course $f_k^*(0)= \mathring f$ almost everywhere on $\RR^6$ as well and hence
	\vspace{-0.1cm}
	\begin{align*}
	\|f(0)-\mathring f\|_{L^2} &= \|f(0)-f_k^*(0)\|_{L^2} \le C\; \|f-f_k^*\|_{W^{1,2}(0,T;L^2)} \to 0, \quad k\to\infty\;.\\[-0.75cm]
	\end{align*}
	Thus $f(0)=\mathring f$ almost everywhere on $\RR^6$.\pskip
	
	\textit{Condition }(ii): We know that $f_k\rightharpoonup f$ in $W^{1,2}(0,T;L^2)\cap L^2(0,T;W^{1,2})$ that is $ H^1(]0,T[\times\RR^6)$ due to Lemma \ref{SOBBOCH}. Then, because of the compact support, the Rellich-Kondrachov theorem implies that $f_k\to f$ in $L^2([0,T]\times\RR^6)$, up to a subsequence. From Lemma \ref{NPOT}\,(b) we can conclude that for any $t\in[0,T]$,
	\begin{align*}
	&\|\delx\psi_{f}(t) - \delx\psi_{f_k}(t)\|_{L^2(\BR)} \le C\; \|f(t)-f_k(t)\|_{L^2} \to 0,\quad k\to\infty\;.
	\end{align*}
	For brevity, we will now use the notation
	\begin{align*}
	\mathbf V( \varphi,f,B) := \delt  \varphi + v\cdot\delx  \varphi - \delx\psi_f\cdot \delv  \varphi + (v\times B)\cdot\delv \varphi\;.
	\end{align*}
	Let $ \varphi\in C_c^\infty(]0,T[\times \RR^6))$ be an arbitrary test function. Then $\mathbf V( \varphi,f_k,B_k)$ is bounded in $L^2(]0,T[\times\BR)$ uniformely in $k$ (the bound may depend on $\varphi$). It also holds that
	$$\mathbf V( \varphi,f,B_k) - \mathbf V( \varphi,f_k,B_k) \to 0, \quad k\to \infty \quad\text{in}\; L^2(]0,T[\times\BR)$$ 
	since $\psi_{f_k}\to\psi_f$ in $L^2\big(]0,T[\times \BR\big)$. Moreover, 
	$$ \mathbf V( \varphi,f,B) -\mathbf V( \varphi,f,B_k) \wto 0,\quad k\to \infty \quad\text{in}\; L^2(]0,T[\times\BR).$$
	Hence by integration by parts,\vspace{-2mm}
	\begin{align*}
	&\int\limits_0^T\int \mathbf V(f,f,B)\; \varphi \dz\,\mathrm dt   = \int\limits_0^T\int f\;\mathbf V( \varphi,f,B) - f_k\;\mathbf V( \varphi,f_k,B_k) \dz\,\mathrm dt  \\
	&\; \le \int\limits_0^T\int f\;\Big(\mathbf V( \varphi,f,B) - \mathbf V( \varphi,f,B_k)\Big) \dz\,\mathrm dt 
	+ \int\limits_0^T\int (f-f_k)\;\mathbf V( \varphi,f_k,B_k) \dz\,\mathrm dt \\
	&\qquad + \int\limits_0^T\int f\;\Big(\mathbf V( \varphi,f,B_k) - \mathbf V( \varphi,f_k,B_k)\Big) \dz\,\mathrm dt \\[2mm]
	&\; \to 0, k\to\infty\;.\\[-0.75cm]
	\end{align*}
	As $\varphi$ was arbitrary this implies that $\mathbf V(f,f,B) = 0$ almost everywhere on $[0,T]\times\RR^6$ that is (ii).\pskip
	
	Consequently $f$ is a strong solution to the field $B$ and thus $f=f_B$ because of uniqueness. Furthermore we have showed that there exists a subsequence $(B_{k_j})$ of $(B_k)$ such that $(f_{B_{k_j}})$ is converging in the demanded fashion. \end{proof}

\section{An optimal control problem with a tracking type cost functional}

Again, let $\mathring f \in C^2_c(\RR^6)$ be any given initial datum and let $T>0$ denote some fixed final time. The aim is to control the time evolution of the distribution function in such a way that its value at time $T$ matches a desired distribution function $f_d\in C^2_c(\RR^6)$ as closely as possible. More precisely we want to find a magnetic field $B$ such that the $L^2$-difference $\|f_B(T)-f_d\|_{L^2}$ becomes as small as possible. Therefore we intend to minimize the quadratic cost functional:
\begin{align}
\label{OP1}
\begin{aligned}
&\text{Minimize} \quad J(B) = \frac 1 2 \|f_B(T)-f_d\|_{L^2(\RR^6)}^2 + \frac \lambda 2  \|D_x B\|_{L^2([0,T]\times\RR^3;\RR^{3\times 3})}^2, \\&\text{s.t.}\; B\in\BB.
\end{aligned}
\end{align}
where $\lambda$ is a nonnegative parameter. The field $B$ is the control in this model. As the state $f_B(t)$ preserves the $p$-norm, i.e., $\|f_B(t)\|_{p} = \|\mathring f\|_{p}$ for all $1\le p\le \infty$, $t\in[0,T]$, it makes sense to assume that $\|f_d\|_{p} = \|\mathring f\|_{p}$ for all $1\le p\le \infty$ because otherwise the exact matching $f(T)=f_d$ would be foredoomed to fail. \pskip

At first appearance the term ${\frac \lambda 2 \|D_x B\|_{L^2}^2}$ seems to be useless or even counterproductive as we actually want to minimize the expression ${\|f(T)-f_d\|_{L^2}}$. However, in optimal control theory, such a term is usually added because of its smoothing effect on the control. If $\lambda>0$ a magnetic field is punished by high values of the cost functional if its derivatives become large. Of course the weight of punishment depends on the size of $\lambda$. For that reason the additional term is referred to as the regularization term. Note that the regularity $B\in\HH$ is now necessary to avoid infinite values of the cost functional.\pskip

%

Of course such an optimization problem does only make sense if there actually exists at least one globally optimal solution. This fact will be established in the next Theorem. The proof is quite short as most of the work has already been done in the previous sections.
\begin{thm}
	\label{EXOC1}
	The optimization problem \eqref{OP1} possesses a (globally) optimal solution $\B*$, i.e., for all $B\in\BB$, $J(\B*)\le J(B)$. 
\end{thm}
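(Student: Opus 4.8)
The plan is to use the direct method of the calculus of variations. First I would observe that the cost functional $J$ is bounded below by $0$, so that the infimum $m := \inf_{B \in \BB} J(B) \in [0,\infty[$ exists; moreover $m < \infty$ since, e.g., $B \equiv 0$ lies in $\BB$. Pick a minimizing sequence $(B_k)_{k\in\NN} \subset \BB$ with $J(B_k) \to m$. Since $\BB$ is bounded in the reflexive Banach space $\VV$ (and in particular in $\WW$), Lemma \ref{LAC}\,(e) yields a subsequence, still denoted $(B_k)$, and some $\B* \in \BB$ with $B_k \rightharpoonup \B*$ weakly in $\VV$, hence in particular weakly in $\WW$.

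Next I would invoke Proposition \ref{COMP}: passing to a further subsequence, $f_{B_k} \to f_{\B*}$ strongly in $L^2([0,T]\times\RR^6)$, and in particular — using the continuous embedding $W^{1,2}(0,T;L^2) \hookrightarrow C([0,T];L^2)$ together with the uniform bounds of Theorem \ref{GWS} on $\|f_{B_k}\|_{W^{1,2}(0,T;L^2)}$ — one can extract convergence of the time slices $f_{B_k}(T) \to f_{\B*}(T)$ in $L^2(\RR^6)$. Actually the cleanest route is to note that the Hölder/Lipschitz estimates of Corollary \ref{WLIP} are not needed here; instead, since $f_{B_k} \rightharpoonup f_{\B*}$ in $W^{1,2}(0,T;L^2)$, the trace at $t=T$ is weakly continuous, so $f_{B_k}(T) \rightharpoonup f_{\B*}(T)$ in $L^2(\RR^6)$, which already gives $\|f_{\B*}(T) - f_d\|_{L^2} \le \liminf_k \|f_{B_k}(T) - f_d\|_{L^2}$ by weak lower semicontinuity of the norm. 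For the regularization term, $B_k \rightharpoonup \B*$ in $\HH = L^2(0,T;H^1)$ implies $D_x B_k \rightharpoonup D_x \B*$ in $L^2([0,T]\times\RR^3;\RR^{3\times3})$, so again by weak lower semicontinuity $\|D_x \B*\|_{L^2} \le \liminf_k \|D_x B_k\|_{L^2}$.

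Combining the two lower semicontinuity estimates with the (super)additivity of $\liminf$ gives
\begin{align*}
J(\B*) = \tfrac12 \|f_{\B*}(T) - f_d\|_{L^2}^2 + \tfrac\lambda2 \|D_x \B*\|_{L^2}^2 \le \liminf_{k\to\infty} J(B_k) = m,
\end{align*}
and since $\B* \in \BB$ we also have $J(\B*) \ge m$, so $J(\B*) = m$, i.e.\ $\B*$ is a globally optimal solution.

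The only genuinely delicate point is the passage $f_{B_k}(T) \rightharpoonup f_{\B*}(T)$ in $L^2$: one must make sure the weak convergence in $W^{1,2}(0,T;L^2)$ furnished by Proposition \ref{COMP} is strong enough to control the terminal time slice, which it is because the evaluation map $u \mapsto u(T)$ is continuous (hence weakly continuous) from $W^{1,2}(0,T;L^2)$ into $L^2(\RR^6)$. Everything else is a routine application of the direct method, which is why — as the text anticipates — the proof is short once Lemma \ref{LAC}\,(e) and Proposition \ref{COMP} are in hand.
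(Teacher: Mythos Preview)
Your proof is correct and follows essentially the same route as the paper: direct method, weak compactness of $\BB$ via Lemma~\ref{LAC}\,(e), Proposition~\ref{COMP} for $f_{B_k}\rightharpoonup f_{\B*}$ in $W^{1,2}(0,T;L^2)$, weak continuity of the evaluation $u\mapsto u(T)$ (the paper phrases this as ``by the fundamental theorem of calculus''), and weak lower semicontinuity of both terms of $J$.
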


\begin{proof} Suppose that $\lambda>0$ (if $\lambda = 0$ the proof is similar but even easier). The cost functional $J$ is bounded from below since $J(B)\ge 0$ for all $B\in\BB$. Hence ${M:={\inf}_{B\in\BB} J(B)}$ exists and we can choose a minimizing sequence $(B_k)_{k\in\NN}$ such that $J(B_k) \to M$ if ${k\to\infty}$. Without loss of generality we can assume that $J(B_k)\le M+1$ for all $k\in\NN$. As $\BB\subset\VV$ is weakly compact according to Lemma~\ref{LAC} it holds that $B_k\rightharpoonup \B*$ in $L^2(0,T;W^{2,\beta})\cap L^2(0,T;H^1)$ for some weak limit $\B*\in\BB$ after extraction of a subsequence. Then we know from Proposition~\ref{COMP} that $f_{B_k} \wto f_\B* $ in $W^{1,2}(0,T;L^2)$ after subsequence extraction. By the fundamental theorem of calculus this implies that $f_{B_k}(T)\wto f_{\B*}(T)$ in $L^2(\RR^6)$. Together with the weak lower semicontinuity of the $L^2$-norm this yields
	\begin{align*}
	J(\B*) & \le \underset{k\to\infty}{\lim\inf}\left[ \frac 1 2 \|f_{B_k}(T)-f_d\|_{L^2}^2 \right] + \underset{k\to\infty}{\lim\inf} \left[ \frac \lambda 2  \|D_x B_k\|_{L^2}^2 \right]\\
	& \le \underset{k\to\infty}{\liminf}\left[ \frac 1 2 \|f_{B_k}(T)-f_d\|_{L^2}^2 +  \frac \lambda 2 \|D_x B_k\|_{L^2}^2 \right]
	= \underset{k\to\infty}{\lim}\; J(B_k) = M.
	\end{align*}
	By the definition of infimum this proves $J(\B*) = M$. \end{proof}

Of course this theorem does not provide uniqueness of a globally optimal solution. In general, the optimization problem may have more than one globally optimal solution and, of course, it may have more than one locally optimal solution. To deduce necessary conditions for local optimality it suggests itself to consider the Fréchet derivative of the cost functional $J$. Therefore Fréchet differentiability of the field-state operator must be established, then Fréchet differentiability of the cost functional follows by chain rule. If $J$ is even twice continuously differentiable this can be used to analyze sufficient conditions for local optimality.

\section*{Appendix}

\begin{proof}[Proof of Lemma \ref{SOBBOCH}]
	\textit{Item} (a): According to K. Yosida, $u\in L^p(0,T;W^{k,q}(\RR^d))$ can be approximated by a sequence of finitely valued functions in the following sense: For any ${k\in\NN}$, there exist $\zeta_i^j \in W^{k,q}(\RR^d)$ for $i=1,...k$ and a family of pairwise disjoint open subsets ${I_i^j\subset [0,T]}, {i=1,...,k}$ with $\lambda\big([0,T]\setminus\bigcup_{i=1}^j I_i^j\big) =0$ such that the sequence defined by
	\begin{align*}
	u_j^*(t,x) := \sum_{i=1}^j \mathds{1}_{I_i^j}(t)\; \zeta_i^j (x),\quad t\in[0,T],x\in\RR^d
	\end{align*}
	satisfies $\|u_j^{*}(t)-u(t)\|_{W^{k,q}} \to 0$ for almost every $t\in[0,T]$ as $k\to\infty$. Approximating $x\mapsto \zeta_i^j (x)$ by $C^\infty_c(\RR^d)$-functions and then approximating $t\mapsto \mathds{1}_{I_i^j}(t)$ suitably by $C^\infty(]0,T[)$-functions yields (a). \pskip
	
	\textit{Item} (b): Note that the Meyers-Serrin theorem holds true for Banach-space-valued Sobolev spaces (cf.\;M.\,Kreuter \cite[s.\,4.2]{kreuter}). Hence, for any $j\in\NN$, we can find some function $v_j\in C^\infty\big(]0,T[;L^q(\RR^d)\big)\cap W^{k,p}\big(0,T;L^q(\RR^d)\big)$ with \linebreak$\|v_j - u\|_{W^{k,p}(0,T;L^q)} \le 1/j$. Now, by Friedrich's mollification, we can approximate $v_j$ by a smooth function $u_j\in C^\infty(]0,T[\times\RR^d)$ with $\supp u_j(t) \subset B_{r_j}(0)$ for some radius $r_j>0$ such that $ \|u_j - v_j\|_{W^{k,p}(0,T;L^q)} \le 1/j$. Then $ u_j \to u$ in ${W^{k,p}(0,T;L^q)}$ if $j\to\infty$ which proves (b). \pskip
	
	\textit{Item} (c): Let $u\in L^p(]0,T[\times\RR^d)$ and $v\in L^q(\RR^d)$ be arbitrary where $q:=\tfrac{p}{p-1}$ if $p>1$ and $q=\infty$ if $p=1$. Then, by Fubini's theorem, the function 
	\begin{align*}
	]0,T[\ni t \mapsto \int u(t,x)\, v(x) \dx
	\end{align*}
	is measurable. As $v$ was arbitrary this implies that $t\mapsto u(t)$ is weakly measurable in the Banach space $L^p(\RR^d)$. Since $L^p(\RR^d)$ is separable, we can conclude that $t\mapsto u(t)$ is also strongly measurable in $L^p(\RR^d)$ (cf. B. J. Pettis~\cite{pettis}). Thus $t\mapsto \|u(t)\|_{L^p}$ is measurable with
	\begin{align*}
	\int\limits_0^T \|u(t)\|_{L^p}^p \dt = \|u\|_{L^p(]0,T[\times\RR^d)}^p <\infty,
	\end{align*}
	i.e., $u\in L^p(0,T;L^p(\RR^d))$. Hence $L^p(]0,T[\times\RR^d)\subset L^p(0,T;L^p(\RR^d))$. \pskip
	
	Let now $u\in L^p(0,T;L^p(\RR^d))$ be arbitrary. Then, according to (a), $u$ can be approximated by a sequence $(u_k)\subset C^\infty(]0,T[\subset \RR^d)$ satisfying the support condition. As $(u_k)$ is a Cauchy sequence in $L^p(0,T;L^p(\RR^d))$ it is obviously also a Cauchy sequence in $L^p(]0,T[\times\RR^d)$. Thus $(u_k)$ converges in $L^p(]0,T[\times\RR^d)$ to some function $u^*\in L^p(]0,T[\times\RR^d)$. Because of uniqueness, $u= u^*$ and hence $L^p(0,T;L^p(\RR^d)) \subset L^p(0,T;L^p(\RR^d))$. This proves (c).  \pskip
	
	\textit{Item} (d): Let $\delt u$ and $\delx u = (\partial_{x_1} u,...,\partial_{x_d} u)^T$ denote the partial derivatives of $u\in W^{1,p}(]0,T[\times \RR^d)$, let $\dot u$ denote the derivative of $u\in W^{1,p}(0,T;L^p(\RR^d))$ and let $\grad u$ denote the derivative of $u\in L^p(0,T;W^{1,p}(\RR^d))$. 
	
	At first we will prove the inclusion
	\begin{align}
	\label{INC1}
	W^{1,p}(0,T;L^p(\RR^d)) \cap L^p(0,T;W^{1,p}(\RR^d)) \subset W^{1,p}(]0,T[\times \RR^d)
	\end{align}
	Therefore, let $u \in W^{1,p}(0,T;L^p(\RR^d)) \cap L^p(0,T;W^{1,p}(\RR^d))$ be arbitrary. Then $u$, $\dot u$ and $\grad u$ are in $L^p(]0,T[\times\RR^d)$ because of (c). To show that $u \in W^{1,p}(]0,T[\times \RR^d)$ with $\delt u = \dot u$ let $\phi \in C_c^\infty (]0,T[\times \RR^d)$ be an arbitrary test function. Without loss of generality we can assume that $\phi = \varphi \psi$ with $\varphi\in C^\infty_c(]0,T[)$ and $\psi\in C^\infty_c(\RR^d)$. Then
	\begin{align*}
	&\int\limits_0^T \int u(t,x)\, \delt\phi(t,x) \dx\dt = \int \int\limits_0^T  u(t,x) \dot \varphi(t) \dt\; \psi(x) \dx \\
	&\quad = \int \int\limits_0^T  \dot u(t,x)  \varphi(t) \dt\; \psi(x) \dx = \int\limits_0^T \int \dot u(t,x)\, \phi(t,x) \dx\dt.
	\end{align*}
	This means that $u$ is partially weakly differentiable with respect to $t$ and its weak derivative is $\delt u = \dot u$. It can be proved analogously that $u$ is partially weakly differentiable with respect to $x_i$, $i=1,...,d$ with $\delxi u = [\grad u]_i$. Thus $u \in W^{1,p}(]0,T[\times \RR^d)$ and then, as $u$ was arbitrary, the inclusion \eqref{INC1} follows. \pskip
	
	The proof of $W^{1,p}(]0,T[\times \RR^d) \subset W^{1,p}(0,T;L^p(\RR^d)) \cap L^p(0,T;W^{1,p}(\RR^d)) $ proceeds similarly.
\end{proof}

\bigskip
\footnotesize

\end{document}